\title{Bounds on the number of Diophantine quintuples}
\author{ 
Tim Trudgian\footnote{Supported by Australian Research Council DECRA Grant DE120100173.}\\
Mathematical Sciences Institute\\ The Australian National University,
 ACT 0200, Australia\\ timothy.trudgian@anu.edu.au
}
\newtheorem{goal}{Goal}
\newtheorem{thm}{Theorem}
\newtheorem{Lem}{Lemma}
\newtheorem{cor}{Corollary}
\newtheorem{conj}{Conjecture}
\begin{document}
\maketitle
\begin{abstract}
\noindent
We consider Diophantine quintuples $\{a, b, c, d, e\}$. These are sets of distinct positive integers, the product of any two elements of which is one less than a perfect square. It is conjectured that there are no Diophantine quintuples; we improve on current estimates to show that there are at most $1.9\cdot 10^{29}$ Diophantine quintuples.
\end{abstract}

\section{Introduction}
Consider the set $\{1, 3, 8, 120\}$. This has the property that the product of any two of its elements is one less than a square. Define a Diophantine $m$-tuple as a set of $m$ distinct integers $a_{1}, \ldots, a_{m}$ such that $a_{i} a_{j} +1$ is a perfect square for all $1\leq i<j\leq m$. Throughout the rest of this article we simply refer to $m$-tuples, and not to Diophantine $m$-tuples.

One may extend any triple $\{a, b, c\}$ to a quadruple $\{a, b, c, d_{+}\}$ where
\begin{equation}\label{d+}
d_{+} = a + b+ c+ 2abc + 2rst, \quad
r= \sqrt{ab +1}, \quad s = \sqrt{ac +1}, \quad t = \sqrt{bc+1},
\end{equation}
by appealing to a result by Arkin, Hoggatt and Straus \cite{AHS}. Indeed, they conjectured that \textit{every} such quadruple is formed in this way. We record this in
\begin{conj}\label{Con1}[Arkin, Hoggatt and Straus]
If $\{a, b, c, d\}$ is a quadruple then $d = d_{+}$.
\end{conj}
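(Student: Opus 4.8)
Conjecture~\ref{Con1} is the so-called \emph{strong} form of the Diophantine quadruple conjecture, and it is at present open; what follows is therefore a description of the principal route along which partial progress has been made, rather than a complete proof. The plan is to translate the quadruple condition into a system of Pell-type equations, parametrise their solutions by binary recurrences, and then pit an elementary gap principle against a Baker-type lower bound for linear forms in logarithms.

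First I would normalise, writing the quadruple as $\{a,b,c,d\}$ with $a<b<c<d$; the theory of regular quadruples shows that the only regular extension of the triple $\{a,b,c\}$ exceeding $c$ is the $d_{+}$ of~\eqref{d+}, so Conjecture~\ref{Con1} is equivalent to the statement that no ``irregular'' quadruple exists. Introduce $ad+1=x^{2}$, $bd+1=y^{2}$, $cd+1=z^{2}$ alongside the $r,s,t$ of~\eqref{d+}, and eliminate $d$ in pairs to obtain the simultaneous Pell-type equations $cx^{2}-az^{2}=c-a$, $cy^{2}-bz^{2}=c-b$ and $bx^{2}-ay^{2}=b-a$. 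The $\ZZ$-solutions of each split into finitely many classes, every class a binary recurrence whose multiplier is a fundamental unit of the relevant quadratic order and whose initial terms are bounded in terms of $a,b,c$. The heart of the matter is to intersect these recurrences: a common solution forces $z$ --- hence $d$ --- either to be small, in which case $d=d_{+}$ after a finite check, or to sit extremely deep in the sequences.

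The quantitative mechanism has two opposing halves. A congruence argument applied to the recurrences (reducing modulo $z_{0}$, or modulo $c$) yields a \emph{gap principle}: an irregular $d$ must be enormous relative to $c$ --- it must exceed $d_{+}$, and then by a further power of the smaller variables --- which forces the index at which two of the sequences could first coincide to be large. Conversely, writing the equality of two recurrence terms as a linear form $\Lambda=m\log\alpha_{1}-n\log\alpha_{2}+\log\beta$ in logarithms of the associated quadratic units and invoking Matveev's theorem gives an \emph{upper} bound for that same index, polynomial in $\log a$, $\log b$ and $\log c$. Forcing the two inequalities together bounds $a,b,c$ and all indices absolutely, and a Baker--Davenport reduction (continued fractions, or LLL) then disposes of the finitely many survivors; this is exactly the engine behind the theorems that there is no Diophantine quintuple and only finitely many irregular quadruples.

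The step I expect to be the genuine obstacle --- and the reason the conjecture is still open --- is closing this gap uniformly in the \emph{smallest} element $a$. The linear-forms bound degrades like a constant times $\log a$, and the heights of the units grow with $a,b,c$, whereas the elementary gap principle improves only polynomially; once $a$ is allowed to be large the upper bound overtakes what the gap principle can absorb, and the two estimates no longer pinch. The argument therefore runs to completion only when $a$, or the ratio $b/a$, is bounded, or when the quadruple is assumed to contain a prescribed small element. Removing that restriction --- and hence settling Conjecture~\ref{Con1} in full --- seems to need a genuinely new input, sharper than Pell equations together with linear forms in logarithms: perhaps an effective handle on the arithmetic of the elliptic curves attached to the triple, or a stronger intrinsic separation between the recurrence sequences.
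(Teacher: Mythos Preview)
The paper does not prove this statement: Conjecture~\ref{Con1} is presented as an open conjecture of Arkin, Hoggatt and Straus, used only to motivate Conjecture~\ref{Con2} and the ensuing bounds on the number of quintuples. There is therefore no proof in the paper to compare your proposal against, and you are right to say so at the outset.

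Your sketch of the standard machinery --- simultaneous Pell equations, binary recurrences, a congruence gap principle played off against a Matveev-type lower bound for linear forms in logarithms, followed by Baker--Davenport reduction --- is an accurate summary of the method used for the partial results the paper cites (Dujella, Fujita, and subsequent refinements), and your diagnosis of why the argument fails to close uniformly in the smallest element is the accepted one. One small anachronism: at the time of this paper the nonexistence of Diophantine quintuples was still open (only finiteness was known, due to Dujella), so in the context of this paper you should speak of ``finitely many quintuples'' rather than ``no Diophantine quintuple''.
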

Note that any possible quintuple $\{a, b, c, d, e\}$ contains, \textit{inter alia} the quadruples $\{a, b, c, d\}$ and $\{a, b, c, e\}$. If Conjecture \ref{Con1} is true then $d_{+} = d = e$, whence $d$ and $e$ are not distinct. Therefore Conjecture \ref{Con1} implies

\begin{conj}\label{Con2}
There are no quintuples.
\end{conj}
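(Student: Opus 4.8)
The plan is to argue by contradiction. Suppose a quintuple $\{a,b,c,d,e\}$ exists, ordered as $a<b<c<d<e$, and derive an impossibility. The leverage comes from the fact that \emph{every} four-element subset is itself a quadruple, so the defining conditions force a large system of simultaneous Pell-type equations. Writing $ae+1=p^{2}$, $be+1=q^{2}$, $ce+1=u^{2}$, $de+1=v^{2}$, together with the six analogous squares for the pairs inside $\{a,b,c,d\}$, I would eliminate $e$ between pairs of these relations to obtain equations of the shape $b p^{2}-a q^{2}=b-a$ (from $ae+1=p^{2}$, $be+1=q^{2}$), and similarly for the other pairs. Each is a Pellian equation whose solutions fall into finitely many binary recurrence sequences, so $e$ must lie simultaneously in several such sequences.

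First I would establish a gap principle: in any quadruple $a<b<c<d$ the largest element is bounded below by a rapidly increasing function of the smaller ones (for the regular extension, $d_{+}\approx 4abc$ via \eqref{d+}), and iterating this inside the quintuple forces $a,b,c,d,e$ to be widely separated, with $e$ enormous relative to $a$. Next I would pin down the exact algebraic relation among the five elements, which is the structural heart of the argument, by showing that the only configuration consistent with all the square conditions is the regular one: $d$ is the extension $d_{+}$ of $\{a,b,c\}$, and $e$ is in turn the regular extension of $\{a,b,c,d\}$. This collapses the problem to deciding whether two specific recurrence sequences arising from the Pell equations can meet, which I would recast as the near-vanishing of a linear form in the logarithms of the relevant fundamental solutions.

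To bound that linear form I would invoke Baker's theory of linear forms in logarithms, in the sharp two-logarithm form of Laurent--Mignotte--Nesterenko, obtaining an explicit but astronomically large upper bound for the recurrence index, hence for $e$ and all of $a,\dots,e$. The gap principle supplies a matching lower bound growing with the index, and comparing the two caps the index at a finite, in principle computable, size. The surviving finite range I would then clear by the Baker--Davenport reduction via continued fractions, iterated as needed, followed by a direct search confirming that none of the remaining candidate systems produces an actual quintuple.

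The main obstacle, and the reason the statement is posed as a conjecture rather than settled as a theorem, is twofold. First, the structural lemma forcing regularity of both $\{a,b,c,d\}$ and its extension to $e$ requires a delicate case analysis that does not follow from elementary size or congruence considerations; the genuine difficulty concentrates in the small cases (where $a$, $b$, or a difference such as $c-b$ is small) in which the gap principle is weakest. Second, the chasm between the Baker-type upper bound and the computationally feasible range is vast, so the reduction must be sharpened and repeated substantially before a finite verification is possible. It is precisely this gap that the present paper confronts quantitatively, bounding the number of quintuples rather than eliminating them outright.
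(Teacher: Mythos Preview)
The statement you are attempting is Conjecture~\ref{Con2}, and the paper does \emph{not} prove it. The only argument the paper offers is the short conditional one immediately preceding the conjecture: a quintuple $\{a,b,c,d,e\}$ contains both quadruples $\{a,b,c,d\}$ and $\{a,b,c,e\}$, so if Conjecture~\ref{Con1} holds then $d=d_{+}=e$, contradicting distinctness. That is an implication from Conjecture~\ref{Con1}, not a proof of Conjecture~\ref{Con2}. The remainder of the paper establishes only the much weaker Theorem~\ref{Main}, a numerical bound on the number of quintuples.

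Your proposal is a reasonable high-level sketch of the standard machinery (simultaneous Pell equations, gap principles, linear forms in logarithms, Baker--Davenport reduction), and you yourself correctly identify why it does not close: the structural step forcing regularity of \emph{both} the quadruple $\{a,b,c,d\}$ and the further extension to $e$ is not available in general, and the gap between the Baker-type upper bound and the computationally feasible range remains too large. So your write-up is not a proof either, and you say so. In short, there is nothing to compare your argument against: the paper treats the statement as open, supplies only the trivial reduction to Conjecture~\ref{Con1}, and otherwise aims at the quantitative Theorem~\ref{Main} rather than at Conjecture~\ref{Con2} itself.
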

Dujella \cite{Dujella2004} proved that there are finitely many quintuples. Subsequent research --- summarised in Table \ref{tab:Qbounds} --- has reduced the bound on the total number of quintuples. We prove
\begin{thm}\label{Main}
There are at most $2.32\cdot 10^{29}$ Diophantine quintuples.
\end{thm}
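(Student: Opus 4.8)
The plan is to show that a Diophantine quintuple is pinned down by a tiny amount of data lying in an explicit finite region, and then to count. Fix a quintuple $\{a,b,c,d,e\}$ with $a<b<c<d<e$. By the structural results of Dujella and Fujita in the literature, the fourth element is (essentially) forced: $d=d_{+}(a,b,c)$ as in \eqref{d+}, or at worst $d$ is one of boundedly many values determined by $\{a,b,c\}$, so the triple $\{a,b,c\}$ already controls $d$. The last element $e$ must satisfy the four simultaneous square conditions $ae+1,\,be+1,\,ce+1,\,de+1=\square$; hence it is a common value of several binary recurrence sequences attached to the corresponding Pellian equations, and so for each fixed triple it can take at most a bounded number of values. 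It therefore suffices to bound the number of triples $\{a,b,c\}$ occurring as the three smallest elements of a quintuple, and to multiply by that bounded constant.

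To bound those triples I would combine gap principles with linear forms in logarithms. On the one hand, deleting $d$ leaves the quadruple $\{a,b,c,e\}$ with $a<b<c<e$, which is not the regular extension of $\{a,b,c\}$ (as $e>d=d_{+}$); the known gap estimates for irregular quadruples then bound $e$, hence $c$, below by a rapidly growing function of $c$. On the other hand, applying Matveev's theorem to the linear form in two or three logarithms arising from the Pell system for $e$ bounds $e$, hence $c$, above by a quantity of the shape $\exp(C(\log c)^{k})$. These inequalities are compatible only for $c<C_{0}$ with $C_{0}$ explicit and effectively computable, so $a<b<c<C_{0}$, and the gap principles further force $b$ and $c$ to be large relative to the smaller elements --- in the favourable cases $c$ is the regular extension $c_{+}(a,b)=a+b+2\sqrt{ab+1}$, up to $O(1)$ choices --- which confines $(a,b,c)$ to an explicit finite set.

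It then remains to count and to optimise. The number of admissible pairs $\{a,b\}$ with $a<b<C_{0}$ and $ab+1$ a perfect square is estimated by elementary means (for each $a$ the admissible $b$ lie in a controlled number of arithmetic progressions, coming from the square roots of $1$ modulo $a$, and the smallest values of $a$ can be settled by direct computation), and multiplying by the $O(1)$ factors for the choices of $c$ and of $e$ yields the total. The main obstacle is purely quantitative: the bound is only as good as the weakest of these estimates, so one must make the Matveev bound and the gap principles numerically compatible in order to keep $C_{0}$ --- and hence the count --- as small as possible, and one must organise a delicate case analysis (over the size of $a$, over congruence conditions on $r=\sqrt{ab+1}$, $s=\sqrt{ac+1}$, $t=\sqrt{bc+1}$, and over which sub-triple of $\{a,b,c,d\}$ has $e$ as its regular extension) so that the explicit constants multiply out to $2.32\cdot10^{29}$ rather than to something an order of magnitude larger.
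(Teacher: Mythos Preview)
Your outline captures the generic shape of every post-Dujella argument in this area, but as a proof of the \emph{specific} bound $2.32\cdot10^{29}$ it has a genuine gap: none of the quantitative input that distinguishes $10^{29}$ from $10^{32}$ or $10^{276}$ is present. Two concrete omissions are decisive.

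First, the organising principle is not the one that works numerically. The paper does not bound $c$ and count triples $\{a,b,c\}$; following Fujita and Elsholtz--Filipin--Fujita, it uses Fujita's result that $d=d_{+}$ and then classifies a sub-triple $\{A,B,C\}$ with $C=d$ (namely $\{a,b,d\}$ or $\{a,c,d\}$) into kinds 2(i)--2(iv) and third kind, according to the relative sizes of $A,B,C$. The entire calculation is performed case by case. In each case the gap/Pell argument is the refined Wu--He lemma, which lower-bounds the index $m$ by an explicit power of $C$, and the linear-forms step uses Aleksentsev's theorem (slightly sharper than Matveev for $n=3$) to upper-bound $m$ by an explicit multiple of $\log^{2}C$; iterating these yields numerical ceilings on $d$ such as $d\le 2.09\cdot10^{71}$ for type 2(ii). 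Your proposed ``$\{a,b,c,e\}$ is irregular, so gap principles bound $e$ below by a function of $c$, and Matveev bounds $e$ above'' is too coarse: without the type-by-type case split you cannot get exponents good enough to push the final count below $10^{30}$, and the dominant case 2(i), which alone contributes $1.8\cdot10^{29}$, is invisible in your sketch.

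Second, the counting step is where most of the new work in the paper lies, and you wave it away as ``elementary means''. The paper counts pairs $(a,b)$ via $r=\sqrt{ab+1}$, so that $ab=r^{2}-1$ and the number of pairs below a threshold is controlled by $\sum_{r\le N} d(r^{2}-1)$; the improvement from $6.8\cdot10^{32}$ to $2.3\cdot10^{29}$ rests on new explicit estimates for $\sum 2^{\omega(n)}$, $\sum 2^{\omega(n)}/n$, $\sum d(n^{2}-1)$, and restricted divisor sums $\sum d(n^{2}-1,A)$ (Lemmas~\ref{bourbon}, \ref{Lem14}, Corollary~\ref{Core3}), together with bounds on $\omega(b)$ via primorials. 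Remarks about ``arithmetic progressions coming from square roots of $1$ modulo $a$'' are in the right direction but do not, by themselves, deliver the constants needed here.
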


\begin{table}[ht]
\caption{Bounds on the number of Diophantine Quintuples}
\centering
\begin{tabular}
{cc}
\hline\hline
& Upper bound on number of quintuples\\ \hline
Dujella \cite{Dujella2004} & $10^{1930}$\\
Fujita \cite{Fujita} & $10^{276}$\\
Fillipin and Fujita \cite{FilFuj} & $10^{96}$\\
Elsholtz, Fillipin and Fujita \cite{EFF} & $6.8\cdot 10^{32}$\\
Trudgian & $1.9\cdot 10^{29}$ \\
    \hline\hline
  \end{tabular}
\label{tab:Qbounds}
\end{table}
Recent work by Wu and He \cite{WuHe} did not explicitly estimate the number of quintuples, though bounds for the second largest element $d$ were considered in some special cases --- see \S \ref{sec:Wu} for more details. We also note that the proof of Proposition 4.2 in \cite{Fujita} appears to be flawed, and hence the estimate in \cite{EFF} is too small. We repair the proof, and improve on it slightly, in \S \ref{sec:bound}.

The layout of the paper is as follows. First, in \S \ref{sec:quinces} we define several classes of quintuples and identify doubles and triples that cannot be extended to quintuples. Second, in \S\S \ref{sec:Wu} and \ref{sec:bound} we bound the size of the second largest element of a quintuple. Essential to Dujella's argument, and to all subsequent improvements, is a result by Matveev \cite{Matveev} on linear forms of logarithms. We make use of a result by Aleksentsev  \cite{Aleks} which, for our purposes, is slightly better. In several places we optimise the argument given by Fujita \cite{Fujita}.

In \S \ref{sec:sums} we estimate some sums from elementary number theory. In \S \ref{sec:calculations} we estimate the total number of quintuples, and we prove Theorem \ref{Main}.
In \S \ref{sec:quad} we define $D(-1)$-quadruples and, using one of our ancillary results, make a small improvement on the estimated number of these. In \S \ref{sec:conch} we conclude with some ideas on possible future improvements.

\subsection*{Acknowledgements}
I am grateful to Olivier Ramar\'{e} and Roger Heath-Brown for discussions about \S \ref{great}, and to Dave Platt who provided oodles of computational advice, suggestions, and banter.

\section{Triples contained within quintuples}\label{sec:quinces}
Fujita \cite{Fujita} considered three classes of triples $\{a, b, c\}$, namely
\begin{enumerate}
\item A triple of the \textit{first kind} when $c>b^{5}$.
\item A triple of the \textit{second kind} when $b>4a$ and $b^{2}\leq c \leq b^{5}$.
\item A triple of the \textit{third kind} when $b> 12 a$ and $b^{5/3} <c<b^{2}$.
\end{enumerate}
In \cite[Lem.\ 4.2]{EFF} it was shown that any quadruple contains a triple of one of the types listed above. Specifically, we have
\begin{Lem}[Lemma 4.2 in \cite{EFF}]
Let $\{a, b, c, d, e\}$ be a Diophantine quintuple with $a<b<c<d<e$. Then
\begin{enumerate}
\item
$\{a, b, d\}$ is a triple of the first kind, or
\item
\begin{enumerate}[(i)]
\item $\{a, b, d\}$  is of the second kind, with $4ab + a + b \leq c \leq b^{3/2}$, or
\item $\{a, b, d\}$  is of the second kind, with $c = a+ b + 2r$, or
\item $\{a, b, d\}$  is of the second kind, with $c> b^{3/2}$, or
\item $\{a, c, d\}$ is of the second kind, with $b<4a$ and $c = a + b + 2r$, or
\end{enumerate}
\item $\{a, c, d\}$ is of the third kind, with $b<4a$ and $c = (4ab + 2)(a + b - 2r) + 2(a+b)$.
\end{enumerate}

\end{Lem}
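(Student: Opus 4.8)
The plan is to exploit the many sub-triples living inside a quintuple: if $\{a,b,c,d,e\}$ has $a<b<c<d<e$, then $\{a,b,c\}$, $\{a,b,d\}$ and $\{a,c,d\}$ are triples and $\{a,b,c,d\}$ is a quadruple. I would classify the triple $\{a,b,d\}$ directly, turning to $\{a,c,d\}$ only when $b<4a$, since that inequality keeps $\{a,b,d\}$ out of both the second and the third kind. Two standard facts drive everything. The first is the classification of the upward extensions of a Diophantine pair: if $\{a,b,c\}$ is a triple with $a<b<c$, then either $c=a+b+2r$, or $c=(4ab+2)(a+b-2r)+2(a+b)$, or $c$ is at least the next term of the associated Pell-type recursions --- and in that last case $c\ge 4ab+a+b$ always, while $c>\tfrac94 b^{3}$ if in addition $b<4a$. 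The second is the quadruple gap principle: in a quadruple $\{a,b,c,d\}$ with $c<d$ one has $d\ge d_{+}=a+b+c+2abc+2rst>4abc$; the only alternative, that $d$ be a later extension of $\{a,b,c\}$, makes $d$ enormous --- larger than $b^{5}$ as soon as $b<4a$ --- so that $\{a,b,d\}$ is then of the first kind.

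First I would dispose of the first kind: if $d>b^{5}$ we are in case~1, so assume henceforth $d\le b^{5}$, hence also $d\le c^{5}$ as $c>b$. In any remaining case $d>4abc>4ab^{2}>b^{2}$, so $\{a,b,d\}$ is never of the third kind. Suppose $b>4a$. Then $b^{2}<d\le b^{5}$ together with $b>4a$ says precisely that $\{a,b,d\}$ is of the second kind, and it only remains to locate $c$ among the extensions of $\{a,b\}$: when $b>4a$ the only small extension is $a+b+2r$, the next being at least $4ab+a+b$. So either $c=a+b+2r$, which is case~2(ii), or $c\ge 4ab+a+b$, in which case $c\le b^{3/2}$ gives case~2(i) and $c>b^{3/2}$ gives case~2(iii).

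It remains to treat $b<4a$. Now $\{a,b,d\}$ is of none of the three kinds --- not the first by assumption, and not the second or third since those require $b>4a$ --- so I would pass to the triple $\{a,c,d\}$. If $c$ were neither $a+b+2r$ nor $(4ab+2)(a+b-2r)+2(a+b)$, then $c>\tfrac94 b^{3}$, whence $d>4abc>b^{5}$, contradicting $d\le b^{5}$; so $c$ equals one of those two values. Moreover $d=d_{+}$, since $d\ne d_{+}$ would, as noted above, force $d>b^{5}$, again contrary to assumption. If $c=a+b+2r$, then a short check gives $c>4a$ and $4ab>c$, so $d>4abc>c^{2}$, while $d\le b^{5}<c^{5}$; hence $\{a,c,d\}$ is of the second kind, which is case~2(iv). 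If $c=(4ab+2)(a+b-2r)+2(a+b)$, necessarily distinct from $a+b+2r$ (so that $a+b-2r\ge 1$), then one verifies $c>12a$; from $a+b-2r\ge 1$ one gets $c>4ab$, so that $d=d_{+}$ yields $d<c^{2}$; and the inequality $c<(4ab)^{3/2}$, which holds exactly because $b<4a$, gives $d>4abc>c^{5/3}$. So $\{a,c,d\}$ is of the third kind, which is case~3, and every alternative has been accounted for.

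The hard part will be the quantitative bookkeeping at the exponents $b^{3/2}$, $b^{5/3}$, $b^{2}$ and $b^{5}$: the estimates for $c$ in terms of $a,b$, and for $d$ (which is essentially $4abc$ once $d=d_{+}$), must be sharp enough that each configuration falls into exactly one bin, leaving no gaps. The delicate ingredients are the clean equivalence $c<(4ab)^{3/2}\iff b<4a$, which is what keeps $\{a,c,d\}$ strictly below $c^{2}$; the lower bound $c>\tfrac94 b^{3}$ for the third and later extensions of a pair with $b<4a$; and the verification that the medium extension satisfies $c>12a$. One must also dispose separately of the finitely many small pairs $(a,b)$ on which these asymptotic inequalities are not yet valid, and confirm the reduction $d=d_{+}$, which rests on the large gap between consecutive extensions of a triple.
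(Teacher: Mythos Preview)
The paper does not prove this lemma at all: it is stated verbatim as ``Lemma 4.2 in \cite{EFF}'' and used without argument, the proof residing entirely in the cited paper of Elsholtz, Filipin and Fujita. There is therefore no proof in the present paper against which to compare your proposal.

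That said, your sketch is a faithful reconstruction of the natural argument and is, as far as one can tell from the lemma's structure, essentially the approach taken in \cite{EFF}: split on whether $d>b^{5}$; in the remaining range use $d>4abc$ to force $d>b^{2}$; then split on $b>4a$ versus $b<4a$, passing in the latter case to the triple $\{a,c,d\}$ and pinning down $c$ among the small extensions of $\{a,b\}$. The ingredients you flag as delicate --- the classification of the extensions $c$ of a pair, the inequality $c<(4ab)^{3/2}$ when $b<4a$, the bound $c>12a$ for the medium extension, and the reduction to $d=d_{+}$ --- are exactly the points where the real work sits, and your proposal identifies them correctly without actually carrying them out. One small remark: the dichotomy $b>4a$ versus $b<4a$ is exhaustive because $b=4a$ cannot occur in a Diophantine pair (it would force $4a^{2}+1$ to be a square), so no case is lost there.
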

In \cite{PlattTrudgianQ1} it was shown that there are no quintuples $\{a, b, c, d, e\}$ such that $\{a, b, d\}$ is a triple of the first kind. 
 In \cite[\S 5]{EFF} it is shown that the \textit{number} of quintuples containing triples  is at most
 $6.74\cdot 10^{32}$ (second kind) and $1.92\cdot 10^{26}$ (third kind).
In this article we focus primarily on quintuples containing triples of the second kind.

\subsection{Doubles and Triples that need not be considered}
If a double or a triple can only be extended to a regular quadruple, it cannot be extended to a quintuple. We call such doubles and triples \textit{discards} since we do not consider them in what follows. The double $\{k, k+2\}$ \cite{Fujitak} and the triple $\{F_{2k}, F_{2k+2}, F_{2k+4}\}$ \cite{DujellaFib} are discards for $k\geq 1$, where $F_{n}$ denotes the $n$th Fibonacci number.

Kedlaya \cite{Kedlaya} has shown that the following are discards:
\begin{equation}\label{kettle}
\{1, 8, 15\}, \{1, 8, 120\}, \{1, 15, 24\}, \{1, 24, 35\}, \{2, 12, 24\}.
\end{equation}

He and Togb\'{e} \cite{HeTogbePeriod} proved that $\{k+1, 4k, 9k+3\}$ is a discard for any $k\geq 1$. In \cite{HeTogbe1} they proved that 
\begin{equation}\label{toggle}
\{k, A^{2}k + 2A, (A+1)^{2}k + 2(A+1)\}, \quad(k\geq 1),
\end{equation}
 is a discard for all $3 \leq A \leq 10$. When $A=1$, the triple in (\ref{toggle}) is covered by Fujita's double $\{k, k+2\}$; when $A=2$, He and Togb\'{e} remark \cite[p.\ 101]{HeTogbe1} that one can use a method similar to that in \cite{HeTogbePeriod} to prove that the triple is a discard. Finally, in \cite{HeTogbe2} they prove that (\ref{toggle}) is a discard for all $A\geq 52330$.

Filipin, Fujita and Togb\'{e} \cite[Cor.\ 1.6, 1.9]{FFT} proved that the following are all discards:
\begin{equation}\label{neck1}
\{k^{2} -1, k^{2} + 2k\}, \quad\quad \{2k^{2} - 2k, 2k^{2} + 2k\}, \quad\quad \{k, 4k -4\} \quad (k\geq 2),
\end{equation}
and
\begin{equation}\label{neck2}
\{3k^{2} - 2k, 3k^{2} + 4k + 1\}, \quad\quad \{3k^{2} + 2k, 3k^{2} + 8k + 5\}, \quad\quad \{k, 4k +4\} \quad (k\geq 1).
\end{equation}

We use the above discards to exhibit the smallest possible values of $b$ in triples of the kinds 2(i), 2(ii) and 2(iii). A quick computer search establishes that there are no $b< 1680$ such that $\{a, b, d\}$ is a triple of the kind 2(i).
Indeed, the only such quadruples $\{a, b, c, d\}$ with $b\leq 10000$ are
\begin{equation*}\label{tablet}
\begin{split}
&\{1, 1680, 23408, 157351935\}, \{1, 4095, 139128, 2279203080\},\\
& \{3, 1680, 23408, 471955461\}, \{8, 4095, 139128, 18231619581\}.
\end{split}
\end{equation*}

Using (\ref{toggle}) with $A=2$, and (\ref{neck1}), we see that the second-smallest element inside a triple of the kind 2(ii) is 21: this corresponds to the quadruple $\{3, 21, 40, 10208\}$. Finally, using (\ref{kettle}), (\ref{neck1}) and (\ref{neck2}) we see that the second-smallest element inside a triple of the kind 2(iii) is $b=15$: this corresponds to the two quadruples $\{1, 15, 528, 32760\}$ and $\{1, 15, 1520, 94248\}$. We record all of these results in
\begin{Lem}\label{lem:jill}
Let $\{A, B, C\}$ be a triple of the kind 2(i), 2(ii), or 2(iii). Then
\begin{equation*}\label{pans}
\textrm{2(i)}\quad B\geq 1680, C\geq 10^{8}, \quad \textrm{2(ii)}\quad B\geq 21, C\geq 10208, \quad \textrm{2(iii)}\quad B\geq 15, C\geq 32760.
\end{equation*}
\end{Lem}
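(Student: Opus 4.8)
The plan is to reduce each kind to a small explicit finite computation, using the discards collected above to dispose of the configurations with small $B$. In each of the kinds 2(i), 2(ii), and 2(iii) the triple under consideration is $\{a,b,d\}$, so $\{A,B,C\}=\{a,b,d\}$; being of the second kind it satisfies $B>4A$ and $B^{2}\le C\le B^{5}$, and in particular $AB+1$ is a perfect square. The inequality $C\ge B^{2}$ already yields the claimed lower bound on $C$ as soon as $B$ exceeds $10^{4}$, $101$, $180$ in kinds 2(i), 2(ii), 2(iii) respectively, so in each kind it remains only (a) to rule out $B$ below the stated threshold, and (b) to check, over the finitely many remaining values of $b$, that the corresponding $d$ is at least the claimed size. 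The quadruples displayed before the statement witness the minimal values of $b$.

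For kind 2(i) the condition $4ab+a+b\le c\le b^{3/2}$ forces $4ab+a+b\le b^{3/2}$, hence $a<\tfrac14\sqrt b$, while $c$ is confined to a bounded interval; so for each $b$ only finitely many Diophantine triples $\{a,b,c\}$ need be inspected, and then $d$ must extend $\{a,b,c\}$ with $\{a,b,d\}$ of the second kind. A direct enumeration of such quadruples $\{a,b,c,d\}$ with $b\le 10^{4}$ is therefore feasible, and it returns exactly the four quadruples listed above --- the smallest with $b=1680$, each with $d>10^{8}$. Together with $C=d\ge b^{2}>10^{8}$ for $b>10^{4}$, this gives $B\ge 1680$ and $C\ge 10^{8}$.

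For kinds 2(ii) and 2(iii) the relevant pair is $\{a,b\}$, which must be a Diophantine pair with $4a<b$, and there are only finitely many of these below each threshold. For kind 2(ii), the Diophantine pairs $\{a,b\}$ with $4a<b\le 20$ are $\{1,8\},\{2,12\},\{3,16\},\{4,20\}$ --- which are $\{k,4k+4\}$ for $k=1,2,3,4$ and hence discards by (\ref{neck2}) --- together with $\{1,15\}$, for which the associated third element is $c=1+15+2\sqrt{16}=24$, so the quintuple would contain the discard triple $\{1,15,24\}$ of (\ref{kettle}). Since no quintuple contains a discard double or triple, $B\ge 21$, the case $b=21$ corresponding to the quadruple $\{3,21,40,10208\}$. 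For kind 2(iii), the only Diophantine pairs with $4a<b\le 14$ are $\{1,8\}$ and $\{2,12\}$, again discards by (\ref{neck2}); hence $B\ge 15$, and at $b=15$ (so $\{a,b\}=\{1,15\}$) the requirement $c>15^{3/2}$ leaves precisely the quadruples $\{1,15,528,32760\}$ and $\{1,15,1520,94248\}$. In both kinds the stated lower bound on $C=d$ then follows from $d\ge b^{2}$ for $b$ above $101$ (resp.\ $180$), together with a short exhaustive search over the remaining finitely many $b$.

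The main obstacle is not a single deep step but the bookkeeping: one must make sure the small-case enumeration is complete, and that every surviving small configuration really belongs to a family \emph{proved} to be a discard. This deserves care --- for example the family (\ref{toggle}) is established as a discard only for $A\le 10$ and for $A\ge 52330$, leaving the window $11\le A\le 52329$ open. The gap is harmless here: a member $\{k,A^{2}k+2A,(A+1)^{2}k+2(A+1)\}$ with $A$ in that window has middle element $A^{2}k+2A\ge 143$, far above the thresholds $21$ and $15$ for kinds 2(ii) and 2(iii), and for kind 2(i) it is in any case caught by the exhaustive quadruple search for $b\le 10^{4}$ --- but this reconciliation ought to be stated rather than left implicit. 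A secondary point is that the lower bounds on $C$ for the intermediate ranges of $b$ rest only on the short searches above and on $C\ge B^{2}$, with no stronger unconditional growth estimate assumed.
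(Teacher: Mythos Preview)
Your proof is correct and follows the same approach as the paper: enumerate the finitely many Diophantine pairs $\{a,b\}$ with $b>4a$ below each threshold and eliminate each via one of the catalogued discard families, with kind 2(i) handled by direct computer search of quadruples with $b\le 10^{4}$. The only cosmetic difference is in which discards are invoked for kind 2(ii) --- the paper cites the triple family (\ref{toggle}) at $A=2$ together with the double $\{k^{2}-1,k^{2}+2k\}$ from (\ref{neck1}) (which at $k=4$ gives $\{15,24\}$), whereas you use the double $\{k,4k+4\}$ from (\ref{neck2}) and the triple $\{1,15,24\}$ from (\ref{kettle}); both routes are valid, and your write-up is in fact more explicit than the paper's about the residual finite checks for $C$ and about the harmless gap $11\le A\le 52329$ in (\ref{toggle}).
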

We conclude this section citing a result by Fujita \cite{fujita2009any}: any potential quintuple $\{a, b, c, d, e\}$ must have $d= d_{+}$, where $d_{+}$ is given in (\ref{d+}). This shows that $d \geq 4abc$ --- we shall use this result frequently in \S \ref{sec:calculations}.

\section{Wu and He's argument}\label{sec:Wu}
The values in a quadruple are linked by a series of Pellian equations. These equations have solutions $v_{m}$ and $w_{n}$ with non-negative integral parameters $m$ and $n$. Wu and He \cite[Lem.\ 2]{WuHe} give a refined version of Lemma 3 in \cite{Dujella2004} by proving
\begin{Lem}[Wu and He]
If $B\geq 8$ and $v_{2m} = w_{2n}$ has solutions for $m\geq 3, n\geq 2$, then $m>0.48 B^{-1/2}C^{1/2}$.
\end{Lem}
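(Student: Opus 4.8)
The plan is to follow the classical Dujella-style analysis of the simultaneous Pellian equations that govern a quadruple, but to squeeze the constants more tightly than in \cite{Dujella2004}. Recall that if $\{A,B,C,D\}$ is a quadruple (here $B$ is the relevant "$b$" and $C$ the relevant "$c$"), then the solutions of the associated Pell equations have the shape $v_m = \tfrac12\bigl(\cdots\bigr)(\,\cdots\,)^m$ and $w_n$ of an analogous exponential form, so that the equality $v_{2m}=w_{2n}$ forces a very precise relation between the "heights" of the two sequences. The first step is to write down the explicit recurrences for $v_m$ and $w_n$, extract the dominant exponential terms, and observe that $v_{2m}=w_{2n}$ together with the standard congruence conditions $m\equiv n \equiv 0 \pmod 2$ (this is why we look at $v_{2m}, w_{2n}$) yields an estimate of the form $|2m\log\alpha - 2n\log\beta + \log(\mu/\nu)|$ being extremely small, where $\alpha,\beta$ are the fundamental units and $\mu,\nu$ are algebraic numbers of controlled size.

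Second, I would carry out the size comparison at the level of the sequences themselves rather than invoking a linear-forms-in-logarithms bound: from $v_{2m}=w_{2n}$ one gets $n < m$ (since the base of the $v$-sequence is smaller) and, more precisely, a lower bound $v_{2m} \geq$ (something like) $C^{m}$ up to bounded factors, while on the other side $w_{2n} \leq (2C)^{n}\cdot(\text{small})$; comparing and taking logarithms produces an inequality of the shape $2m\log\alpha \le 2n\log\beta + O(1)$, hence $m/n$ is pinned near $\log\beta/\log\alpha$, which itself is close to $1$ up to a factor involving $B$ and $C$. Third — and this is the quantitative heart — one uses the congruence $v_{2m}\equiv w_{2n} \pmod{C}$ (or modulo a suitable multiple of $C$), expanded as a polynomial congruence in $m$ and $n$, to force $m$ and $n$ to be comparable to $C^{1/2}$: the standard computation gives $4n^2 \equiv$ (bounded quantity) $\pmod C$ type relations for the lower-index terms, and tracking these through for the indices $2m,2n$ yields $m \gg B^{-1/2}C^{1/2}$. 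The constant $0.48$ will come from being careful about the $B\geq 8$ hypothesis and the hypotheses $m\geq 3$, $n\geq 2$, which let one discard error terms that would otherwise spoil a clean constant; essentially one checks that $(1 - \varepsilon(B))/2 > 0.48$ once $B\geq 8$.

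The main obstacle I anticipate is getting the constant to be as large as $0.48$ rather than something like $0.4$ or worse: the naive estimates lose a factor $2$ in several places (from the doubling of indices, from bounding $\log\beta/\log\alpha$, from the congruence step), and to recover the claimed value one has to use the hypotheses $m\geq 3$, $n\geq 2$ to replace crude bounds such as $\alpha^{2m} > \cdots$ by sharper ones, and to use $B\geq 8$ to control the ratio $C/B$ in the denominators. A secondary technical point is handling the "$\log(\mu/\nu)$" term — one must verify it is genuinely of size $O(\log C)$ and not larger — but this is routine once the algebraic structure of the Pell solutions is written out. So the real work is a careful bookkeeping of constants in an otherwise standard argument; I would structure the proof as (1) set up the Pell sequences and their closed forms, (2) derive the near-equality of exponents, (3) run the modular argument to get $m,n \asymp C^{1/2}$, (4) optimise the constants using $B\geq 8$, $m\geq 3$, $n\geq 2$ to reach $m > 0.48\,B^{-1/2}C^{1/2}$.
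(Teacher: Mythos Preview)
Your plan conflates two distinct techniques, and the part you emphasise is not the one that does the work. Steps (1)--(2) of your outline --- closed exponential forms, comparison of $\log\alpha$ and $\log\beta$, a small linear form in logarithms --- belong to the Baker-type machinery used \emph{elsewhere} (see \S\ref{sec:bound}); they play no role in the Wu--He bound. The proof of this lemma is entirely elementary and algebraic, and the constant $0.48$ does not arise from estimating $\log\beta/\log\alpha$.

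The actual argument (followed in the paper's proof of Lemma~\ref{rib}, which re-does Wu--He) runs as follows. From Dujella's congruence $Am^{2}+\lambda Sm\equiv Bn^{2}+\lambda Tn\pmod{4C}$ one uses $B\geq 8$ and the assumed upper bound $m\leq \alpha B^{-1/2}C^{1/2}$ to check that each of the four terms is smaller than $C$, turning the congruence into an \emph{equality}. One then eliminates $\lambda$ algebraically to obtain the factorisation
\[
m^{2}-n^{2}=\bigl(C+\lambda(Tn+Sm)\bigr)\bigl(Bn^{2}-Am^{2}\bigr).
\]
The crux --- which you do not mention and which the paper flags as apparently missing from Wu--He's original proof --- is to show $Bn^{2}\neq Am^{2}$, so that the second factor is a non-zero integer and hence $|m^{2}-n^{2}|\geq |C+\lambda(Tn+Sm)|$. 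For $\lambda=+1$ this gives $m^{2}>C$, an immediate contradiction; for $\lambda=-1$ one gets $C\leq m(T+S)+\tfrac34 m^{2}$, and solving this quadratic in $m$ yields the constant. Your step (3) gestures at a congruence of the right flavour, but the specific identity above and the $Bn^{2}\neq Am^{2}$ step are the genuine content; without them the modular information alone does not force $m\gg C^{1/2}$.
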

It appears as though their proof is incomplete. We give the following, tailored version below, with a small improvement.
\begin{Lem}\label{rib}
Let $\{A, B, C\}$ be one of the triples 2(i), 2(ii), 2(iii). Then, for $B\geq 8$, if $v_{2m} = w_{2n}$ has solutions for $m\geq 3, n\geq 2$, then the following bounds for $m$ hold
\begin{equation*}\label{pots}
\textrm{2(i)}\quad m\geq 1.3330 C^{1/4}, \quad \textrm{2(ii)}\quad m\geq 0.9282 C^{1/4}, \quad \textrm{2(iii)}\quad m\geq 0.8609 C^{3/10}.
\end{equation*}

\end{Lem}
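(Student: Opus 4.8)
The plan is to follow the Wu--He/Dujella strategy of converting the equation $v_{2m}=w_{2n}$ into a congruence and then into an inequality on $m$, but to track the numerical constants carefully using the lower bounds for $B$ and $C$ recorded in Lemma~\ref{lem:jill}. Recall that the Pellian setup attached to the triple $\{A,B,C\}$ produces, from a common solution $v_{2m}=w_{2n}$, a congruence of the shape $Bm^{2}\equiv Cn^{2}\pmod{\text{something of size }\asymp C}$ (this is exactly Lemma~3 of \cite{Dujella2004}, as refined in \cite[Lem.\ 2]{WuHe}); more precisely one obtains a relation forcing $|Bm^{2}-Cn^{2}|$ to be comparatively small, while $n$ and $m$ are linked by $n\approx m\sqrt{B/C}$ up to a controlled error coming from the recurrences for $v$ and $w$. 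First I would reproduce this congruence/inequality verbatim from \cite{WuHe}, isolating the point at which their argument is incomplete and supplying the missing estimate — presumably a bound showing that $n\ge 2$ and $m\ge 3$ already force the "small" residue to be nonzero, so that it is at least $1$, which is where the inequality $m\gg C^{1/2}B^{-1/2}$ comes from.

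Next I would feed in the kind-specific relations between $B$ and $C$. For triples of kinds 2(i) and 2(ii) we have $C\le B^{3/2}$ (kind 2(i) by the defining inequality $C\le b^{3/2}$ in the statement of Lemma~2(i), and kind 2(ii) likewise lies in the second-kind range with $C=a+b+2r\asymp B$, which is even smaller, hence certainly $\le B^{3/2}$), so $B^{-1/2}\ge C^{-1/3}$ and the bound $m\ge c_{0}C^{1/2}B^{-1/2}$ upgrades to $m\ge c_{0}C^{1/2-1/3}=c_{0}C^{1/6}$ — wait, that is weaker than $C^{1/4}$, so in fact I must use $B\le C^{?}$ in the other direction. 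The correct move: for the second kind $B^{2}\le C$, equivalently $B\le C^{1/2}$, so $B^{-1/2}\ge C^{-1/4}$ and hence $m\ge c_{0}C^{1/2}B^{-1/2}\ge c_{0}C^{1/2-1/4}=c_{0}C^{1/4}$, which is exactly the target exponent for 2(i) and 2(ii). For kind 2(iii) we have $B^{5/3}<C<B^{2}$, so $B<C^{3/5}$, giving $B^{-1/2}>C^{-3/10}$ and $m\ge c_{0}C^{1/2}B^{-1/2}>c_{0}C^{1/2-3/10}=c_{0}C^{1/5}$; to reach the stronger exponent $3/10$ claimed in the lemma I expect one must instead use the \emph{lower} bound $C>B^{5/3}$, i.e.\ $B<C^{3/5}$, in a sharper form of the inequality where the exponent of $m$ comes out as $C/B$ rather than $C^{1/2}/B^{1/2}$ — this is the delicate part and I would check Wu--He's Lemma and Dujella's Lemma~3 to see which of the two forms is actually available for each kind, since the $v_{2m}=w_{2n}$ congruence can be exploited at two different "levels."

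The constants $1.3330$, $0.9282$, $0.8609$ would then be extracted by plugging the explicit minima $B\ge 1680$, $B\ge 21$, $B\ge 15$ (and correspondingly $C\ge 10^{8}$, $C\ge 10208$, $C\ge 32760$) from Lemma~\ref{lem:jill} into the inequality $m\ge f(B,C)$ at the worst case, and optimising: $f(B,C)=g(C)\cdot B^{-\alpha}$ is decreasing in $B$, so one substitutes the largest admissible $B$ in terms of $C$ (namely $B=C^{1/2}$ for kinds 2(i),(ii) and $B=C^{3/5}$ for kind 2(iii)) and then the smallest admissible $C$ only where a residual $C$-dependence remains. The main obstacle, I expect, is \emph{not} the bookkeeping but pinning down precisely which inequality Wu--He intended and repairing the gap: one must verify that for $m\ge 3$, $n\ge 2$ the relevant quantity cannot vanish, and that the error terms in passing from the recurrence relations to the clean congruence are genuinely smaller than the main term for all $B\ge 8$ — the borderline small-$B$ cases (especially $B$ between $8$ and the Lemma~\ref{lem:jill} thresholds, which are vacuous but must be formally excluded) are exactly where an incomplete argument would fail, so some care with the ranges $m\ge 3$, $n\ge 2$ rather than $m\ge 1$ is essential.
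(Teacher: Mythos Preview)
Your overall plan --- start from the Wu--He bound $m>\alpha B^{-1/2}C^{1/2}$ and convert it to a pure power of $C$ via an inequality $B\le C^{\gamma}$ --- is exactly right, but the details are garbled in two places and this causes the 2(iii) case to go off the rails. First, you have misidentified the triple: in all three sub-cases 2(i), 2(ii), 2(iii) the triple $\{A,B,C\}$ is $\{a,b,d\}$, which is a triple of the \emph{second} kind (so $B^{2}\le C$, not $B^{5/3}<C<B^{2}$). The distinctions among 2(i)--(iii) refer to where $c$ sits, and they enter only through the inequality $C=d>4abc$: for 2(i) one has $c\ge 4ab+a+b>4B$, giving $C>16B^{2}$ and hence $B<C^{1/2}/4$; for 2(ii), $c=a+b+2r>B$ gives $B<C^{1/2}/2$; for 2(iii), $c>b^{3/2}=B^{3/2}$ gives $C>4B^{5/2}$ and hence $B<(C/4)^{2/5}$. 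Substituting these into $m>\alpha B^{-1/2}C^{1/2}$ yields the exponents $1/4$, $1/4$, $3/10$ directly --- no ``second level'' of the congruence is needed, and your attempt to use the third-kind inequality $B^{5/3}<C<B^{2}$ is simply the wrong relation.

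Second, your description of the congruence and of the gap in Wu--He is off. The relevant congruence (Dujella, Lemma~4 in \cite{Dujella2001}) is $Am^{2}+\lambda Sm\equiv Bn^{2}+\lambda Tn\pmod{4C}$ with $S=\sqrt{AC+1}$, $T=\sqrt{BC+1}$, and one has $n\le m\le 2n$ (not $n\approx m\sqrt{B/C}$). Showing each side is below $C$ turns the congruence into an equality, which rearranges to $m^{2}-n^{2}=(C+\lambda(Tn+Sm))(Bn^{2}-Am^{2})$. The missing step in Wu--He is precisely to rule out $Bn^{2}=Am^{2}$; the fix works only because $B>4A$ for second-kind triples, so $m=\sqrt{B/A}\,n>2n$, contradicting $m\le 2n$. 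Once $Bn^{2}\ne Am^{2}$, the $\lambda=-1$ branch gives an inequality in $\alpha$ which one solves using the minima for $B$ and $C$ from Lemma~\ref{lem:jill}; the stated constants are then $\alpha$ times the conversion factors $2$, $\sqrt{2}$, $4^{1/5}$ coming from the three displayed bounds on $B$.
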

\begin{proof}
We proceed as in \cite{WuHe}. Assume that $m\leq \alpha B^{-1/2} C^{1/2}$ for some $\alpha$ to be determined later. By Lemma 4 in \cite{Dujella2001} we have
\begin{equation}\label{rump}
Am^{2} + \lambda Sm \equiv B n^{2} + \lambda Tn \pmod{4C},
\end{equation}
for some $\lambda = \pm 1$. We aim at showing that (\ref{rump}) is actually an equality. Since $B\geq 8$ it is easy to see that each of the four terms in (\ref{rump}) is less than $C$. We conclude that $Am^{2} - Bn^{2} = \lambda(Tn - Sm)$, which, upon rearranging, and invoking the definitions of $T$ and $S$ gives
\begin{equation}\label{sirloin}
m^{2} - n^{2} = (C + \lambda(Tn + Sm))(Bn^{2} - Am^{2}).
\end{equation}
We now aim at showing that $Bn^{2} \neq Am^{2}$, so that, since the second term on the right of (\ref{sirloin}) is an integer, we have
\begin{equation}\label{shoulder}
|m^{2} - n^{2}| \geq |C + \lambda(Tn + Sm)|.
\end{equation}
We assume, in order to obtain a contradiction, that $m = \sqrt{B/A} n$. Since $n \leq m \leq 2n$ we certainly obtain a contradiction when $\{A, B, C\}$ is a triple of the second kind, since then $B/A>4$ whence $m>2n$. It is unclear how Wu and He derive a contradiction in general.

There are two choices for $\lambda$ in (\ref{shoulder}). The choice $\lambda = 1$ shows that $m^{2}>C$, which contradicts our upper bound on $m$. The choice $\lambda = -1$ forces us to consider
\begin{equation}\label{neck}
m^{2} - n^{2} \geq C - (Tn + Sm),
\end{equation}
since $Tn + Sm < 2Tn <C$. Rearranging (\ref{neck}) and using the bound $n\geq m/2$ we obtain
\begin{equation}\label{chop}
C\leq m(T+S) + \frac{3}{4} m^{2} \leq m(BC)^{1/2} \left( \sqrt{1 + 1/(BC)} + \sqrt{\frac{1}{4} + 1/(BC)}\right) + \frac{3}{4} m^{2}.
\end{equation}
We insert our upper bound for $m$ and the lower bound for $B$ and $C$ from Lemma \ref{lem:jill} in (\ref{chop}). This yields an expression in terms of $\alpha$: we wish to choose the largest $\alpha$ for which this is less than $C$, which yields a contradiction. Having solved for $\alpha$ we note that, for
 $\{a, b, d\} = \{A, B, C\}$ a triple of the second kind, we have
\begin{equation}\label{parse}
\begin{split}
&\textrm{2(i)} \qquad C = d > 4abc = 4ABc \geq 16 B^{2} \Rightarrow B < \frac{C^{1/2}}{4}\\
&\textrm{2(ii)} \qquad C = d > 4abc = 4ABc  \geq 4 B^{2} \Rightarrow B < \frac{C^{1/2}}{2}\\
&\textrm{2(iii)} \qquad C = d > 4abc = 4ABc > 4B^{5/2} \Rightarrow B < \left(\frac{C}{4}\right)^{2/5}.
\end{split}
\end{equation}
Using the bounds in (\ref{parse}) and the values of $\alpha$ proves the lemma.
\end{proof}

We now proceed to correcting and improving on a result by Fujita.

\section{Improving Proposition 4.2 in \cite{Fujita}}\label{sec:bound}
We give two estimates of Proposition 4.2 in \cite{Fujita}. The first holds in general; the second uses a slight improvement for quintuples containing triples of the second kind. In \S \ref{sec:calculations} we use the second version and Lemma \ref{rib} to reduce the bound on the number of Diophantine quintuples.

First we use quote a result of Aleksentsev.

\begin{thm}\label{Alex}
Let $\Lambda$ be a linear form in logarithms of $n$ multiplicatively independent totally real algebraic numbers $\alpha_{1}, \ldots \alpha_{n}$, with rational coefficients $b_{1}, \ldots, b_{n}$. Let $h(\alpha_{j})$ denote the absolute logarithmic height of $\alpha_{j}$ for $1\leq j \leq n$. Let $d$ be the degree of the number field $\mathcal{K} = \mathcal{Q}(\alpha_{1}, \ldots, \alpha_{n})$, and let $A_{j} = \max(d h(\alpha_{j}), |\log \alpha_{j}|, 1)$. Finally, let
\begin{equation}\label{tail}
E = \max\left( \max_{1 \leq i, j \leq n} \left\{ \frac{|b_{i}|}{A_{j}} + \frac{|b_{j}|}{A_{i}}\right\}, 3\right).
\end{equation}
Then
\begin{equation}\label{kidney}
\log |\Lambda| \geq - 5.3 e n^{1/2} (n+1)(n+8)^{2}(n+5)(31.5)^{n} d^{2} (\log E) A_{1}\cdots A_{n} \log(3nd).
\end{equation}
\end{thm}
We have made use of the first displayed equation on \cite[p.\ 2]{Aleks} to define $E$ in (\ref{tail}) as this makes our application easier. We note also that, while Aleksentsev's result is worse than Matveev's for large $n$, when $n=3$ we obtain a slight improvement. We apply Theorem \ref{Alex} for $d=4, n=3$ and to 
\begin{equation*}\label{cheek}
\alpha_{1} = S+ \sqrt{AC}, \quad \alpha_{2} = T + \sqrt{BC}, \quad \alpha_{3} = \frac{\sqrt{B}(\sqrt{C} \pm \sqrt{A})}{\sqrt{A} \left( \sqrt{C} \pm \sqrt{B}\right)}, \quad \Lambda = j\alpha_{1} - k\alpha_{2} + \alpha_{3}.
\end{equation*}
We proceed, as in Fujita \cite[\S 4]{Fujita} and Dujella \cite[\S 8]{Dujella2004}.
Our starting point uses Lemma 3.3 in \cite{Fujita} which states that any quadruple contains a standard triple. We denote this triple by $\{A, B, C\}$, the quadruple by $\{a, b, c, d\}$, and the attached sequences by $\{V_{j}\}$ and $\{W_{k}\}$, where $k\leq j$. We also set $S = \sqrt{AC +1}$ and $T = \sqrt{BC +1}$. 

Let $C\geq C_{0}$. In general we have $C\geq B^{5/3}$; in the case of a triple of the second kind, in which we are most interested, we have $C\geq B^{2}$ and $B> 4A$. We provide details of the general case.
To apply Theorem \ref{Alex} we first estimate
\begin{equation}\label{eq:A1}
A_{1} = 2 \log \alpha_{1} \leq 2 \log\left(2 \sqrt{AC +1}\right).
\end{equation}
Since $A<B<C^{3/5}$, we may bound the right hand side of (\ref{eq:A1}) to show that
\begin{equation}\label{eq:A1bound}
A_{1} \leq g_{1}(C_{0}) \log C,
\end{equation}
where
\begin{equation*}\label{eq:g1}
g_{1}(x) = \frac{8}{5} + \frac{2 \log 2 + \log (1+ x^{-8/5})}{\log x}.
\end{equation*}
The same bound holds for $A_{2} = 2 \log \alpha_{2}$.
We obtain lower bounds for $A_{1}$ and $A_{2}$ in a similar fashion. Since $A_{1} \geq 2 \log \left( 2 \sqrt{AC}\right)$ we have
\begin{equation}\label{eq:A1lower}
A_{1} \geq g_{2}(C_{1}, A_{0}) \log C,
\end{equation}
where 
\begin{equation*}\label{eq:g2}
g_{2}(C_{1}, A_{0}) = 1 + \frac{2 \log 2 + \log A_{0}}{\log C_{1}}, \quad (A\geq A_{0}, C\leq C_{1}).
\end{equation*}

We have $A_{3} = 4 h(\alpha_{3})$, where the leading coefficient of $\alpha_{3}$ is $a_{0} = A^{2}(C-B)^{2}$. It is easy to show that 
\begin{equation}\label{mugs}
\sqrt{\frac{B}{A}} < \frac{\sqrt{B}(\sqrt{C} \pm \sqrt{A})}{\sqrt{A}(\sqrt{C} - \sqrt{B})} \leq \frac{\sqrt{B}(\sqrt{C} + \sqrt{B})^{2}}{\sqrt{A}(C - B)} \leq \frac{\sqrt{B}(1 + \sqrt{B/C})^{2}}{\sqrt{A}(1 - B/C)}.
\end{equation}
Since the function
\begin{equation*}\label{eq:g4}
g_{3}(x) = \frac{(1+ \sqrt{x})^{2} }{1-x}
\end{equation*}
is increasing for $x\in(0, 1)$ we have
\begin{equation*}\label{eq:A3bound}
A_{3} \leq \log \left\{ g_{3}(C_{0}^{-2/5}) C^{16/5}\right\}, \quad (C_{0}\leq C \leq C_{1}).
\end{equation*}
We therefore obtain
\begin{equation}\label{eq:A3actual}
A_{3} \leq g_{4}(C_{0}) \log C,
\end{equation}
where
\begin{equation*}\label{eq:g5}
g_{4}(C_{0}) = \frac{16}{5} + \frac{ \log g_{3}(C_{0}^{-2/5})}{\log C_{0}}.
\end{equation*}
Also, since $A_{3} \geq \log \left( A^{2} (C-B)^{2}\right)$ we have
\begin{equation}\label{eq:A3lower}
A_{3} \geq g_{5}(A_{0}, C_{0}) \log C,
\end{equation}
where
\begin{equation*}\label{eq:g6}
g_{5}(A_{0}, C_{0}) = 2 + \frac{2\log A_{0} + 2 \log(1- C_{0}^{-2/5})}{\log C_{0}}.
\end{equation*}
Using the fact that $g_{1}(x)/g_{5}(1, x)$ is decreasing in $x$ we find that $E \leq 2j/(g_{2}\log C_{0})$.

We are now in a position to evaluate the terms on the right side of (\ref{kidney}). By (\ref{eq:A1bound}), (\ref{eq:A1lower}) and (\ref{eq:A3actual}) we have
\begin{equation}\label{eq:miracle}
-\log \Lambda \leq 1.7315\cdot 10^{11} g_{1} g_{4} \log  \left( \frac{2j}{g_{2} \log C_{0}}\right) \log^{3} C.
\end{equation}
We can bound the left side of (\ref{eq:miracle}) using \cite[(4.1)]{Fujita}, which states that
\begin{equation*}\label{lamb}
0<\Lambda < \frac{8}{3} AC \xi^{-2j}.
\end{equation*}
Since $\xi = \sqrt{AC +1} + \sqrt{AC} \geq 2 \sqrt{AC}$  we have
\begin{equation}\label{eq:lam}
\log \Lambda \leq - g_{6} j \log C,
\end{equation}
where
\begin{equation}\label{eq:g7}
g_{6}(A_{0}, C_{0}, C_{1}, j_{0}) = 1 + \frac{2 \log (2 \sqrt{A_{0}})}{\log C_{1}} - \frac{2}{j_{0}} - \frac{ \log \frac{8}{3}}{j_{0} \log C_{0}}, \quad (C_{0} \leq C \leq C_{1}, A\geq A_{0}, j\geq j_{0}).
\end{equation}
If all that is known is that $j\geq j_{0} = 10^{10}$, say, then (\ref{eq:g7}) gives $g_{6} \geq 0.9999$. Fujita \cite{Fujita},  six lines from the bottom of page 23, appears to have substituted a lower bound for an upper bound in this calculation, leading to $g_{6} \geq 1.2006$. This appears to be an error; the corresponding bound in Dujella \cite{Dujella2004}, on the sixth line of page 23, is correct.

Combining (\ref{eq:lam}) and (\ref{eq:miracle}) proves\begin{Lem}\label{lem:gen}
\begin{equation*}\label{eq:tot_b}
\frac{j}{\log \left( \frac{2j}{g_{2} \log C_{0}}\right)} \leq 1.7315\cdot 10^{11} g_{1}^{2} g_{4} g_{6}^{-1} \log^{2} C.
\end{equation*}
\end{Lem}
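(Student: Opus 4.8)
The plan is to sandwich the quantity $-\log\Lambda$ between the two bounds already assembled in this section and then solve for $j$. Since Fujita's estimate gives $0<\Lambda<\tfrac{8}{3}AC\,\xi^{-2j}$, the logarithm $\log\Lambda$ is a well-defined negative number, so $-\log\Lambda>0$, and both of the bounds below refer to this positive quantity.

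First I would record the upper bound for $-\log\Lambda$. Applying Theorem \ref{Alex} with $n=3$ and $d=4$ to $\alpha_1,\alpha_2,\alpha_3$ as above, the numerical constant $5.3\,e\,n^{1/2}(n+1)(n+8)^2(n+5)(31.5)^n d^2\log(3nd)$ evaluates to at most $1.7315\cdot 10^{11}$. Into \eqref{kidney} I then feed the upper bounds $A_1,A_2\le g_1\log C$ from \eqref{eq:A1bound} and $A_3\le g_4\log C$ from \eqref{eq:A3actual}, so that $A_1A_2A_3\le g_1^2 g_4\log^3 C$, together with the bound $\log E\le\log\!\bigl(2j/(g_2\log C_0)\bigr)$ established just above \eqref{eq:miracle}. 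This produces \eqref{eq:miracle}, namely
\[
-\log\Lambda\;\le\;1.7315\cdot 10^{11}\,g_1^2 g_4\,\log\!\Bigl(\tfrac{2j}{g_2\log C_0}\Bigr)\log^3 C .
\]
Next I would record the matching lower bound. Using $\xi=\sqrt{AC+1}+\sqrt{AC}\ge 2\sqrt{AC}$ in Fujita's inequality and taking logarithms yields \eqref{eq:lam}, i.e. $\log\Lambda\le -g_6 j\log C$, equivalently $-\log\Lambda\ge g_6 j\log C$, where $g_6$ is the explicit function in \eqref{eq:g7}; under the standing hypotheses ($C_0\le C\le C_1$, $A\ge A_0$, $j\ge j_0$) one has $g_6\ge 0.9999>0$.

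Chaining the two displays gives
\[
g_6 j\log C\;\le\;-\log\Lambda\;\le\;1.7315\cdot 10^{11}\,g_1^2 g_4\,\log\!\Bigl(\tfrac{2j}{g_2\log C_0}\Bigr)\log^3 C ,
\]
and dividing through by the strictly positive quantity $g_6\log C\cdot\log\!\bigl(2j/(g_2\log C_0)\bigr)$ rearranges to the claimed inequality
\[
\frac{j}{\log\!\bigl(2j/(g_2\log C_0)\bigr)}\;\le\;1.7315\cdot 10^{11}\,g_1^2 g_4 g_6^{-1}\log^2 C .
\]

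There is no deep obstacle, since the substantive work — the choice of $\alpha_1,\alpha_2,\alpha_3$, the height estimates packaged into $g_1,\dots,g_6$, and Fujita's explicit upper bound for $\Lambda$ — is carried out before the lemma. The only points requiring care are: (i) checking that Aleksentsev's constant at $(n,d)=(3,4)$ really is below $1.7315\cdot 10^{11}$; (ii) keeping the inequalities correctly oriented when passing from $\log\Lambda$ to $-\log\Lambda$ and when dividing; and (iii) confirming that $g_6>0$ and that $\log\!\bigl(2j/(g_2\log C_0)\bigr)>0$ so the final division is legitimate — which is precisely where the hypotheses $A\ge A_0$, $C_0\le C\le C_1$, $j\ge j_0$ enter.
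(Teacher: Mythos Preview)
Your proposal is correct and follows essentially the same approach as the paper: the paper's ``proof'' is the single sentence ``Combining \eqref{eq:lam} and \eqref{eq:miracle} proves Lemma~\ref{lem:gen}'', and you have spelled out exactly that combination, with the added care of justifying the division step. Note that the paper's displayed \eqref{eq:miracle} actually has $g_1 g_4$ rather than $g_1^2 g_4$; your version with $g_1^2 g_4$ is the correct one, since both $A_1$ and $A_2$ are bounded by $g_1\log C$.
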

A particular case of Lemma \ref{lem:gen} follows from the bounds $(A_{0}, B_{0}, C_{0}) = (1, 8,6440)$
\begin{cor}\label{Dover}
Suppose that $\{a, b, c, d, e\}$ is a quintuple with $j\geq 10^{10}$. Then
\begin{equation*}\label{snooker}
\frac{j}{\log 0.228 j} \leq 1.7548\cdot 10^{12}\log^{2} C.
\end{equation*}
\end{cor}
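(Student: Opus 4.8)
The plan is to obtain Corollary~\ref{Dover} as the instance of Lemma~\ref{lem:gen} attached to the standard triple $\{A,B,C\}$ of a quadruple $\{a,b,c,d\}$ lying inside a quintuple. The first job is therefore to feed in admissible values of the parameters $A_0,B_0,C_0,C_1,j_0$ appearing in (or behind) Lemma~\ref{lem:gen}. Trivially $A\geq1$, so $A_0=1$; the discards and finite computer searches of \S\ref{sec:quinces}, together with the notion of a standard triple, give $B\geq8$ and $C\geq6440$, so $B_0=8$ and $C_0=6440$. The corollary's hypothesis is precisely $j\geq j_0=10^{10}$. The one feature distinguishing this from a bounded situation is that a quintuple imposes no a~priori upper bound on $C=d$: there is no finite $C_1$, so I would let $C_1\to\infty$ throughout.

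Given these choices, the rest is arithmetic. Since $g_1$ and $g_4$ are each decreasing in their argument, the estimates (\ref{eq:A1bound}) and (\ref{eq:A3actual}) --- $A_1,A_2\leq g_1(C_0)\log C$ and $A_3\leq g_4(C_0)\log C$ --- hold uniformly for all $C\geq C_0=6440$, so one simply evaluates $g_1(6440)$ and $g_4(6440)$. Letting $C_1\to\infty$ sends $g_2\to1$ (its numerator $2\log2+\log A_0$ equals $2\log2$ when $A_0=1$), so the left-hand denominator in Lemma~\ref{lem:gen} becomes $\log(2j/\log6440)$; since $2/\log6440=0.22804\ldots$, rounding down to $0.228$ produces the $\log(0.228\,j)$ of the corollary and alters the left-hand side only negligibly. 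For $g_6$, formula (\ref{eq:g7}) with $A_0=1$, $C_0=6440$, $C_1\to\infty$ and $j_0=10^{10}$ yields $g_6\geq0.9999$, so $g_6^{-1}<1.0002$, say. Assembling the pieces, $1.7315\cdot10^{11}\cdot g_1(6440)^2\,g_4(6440)\,g_6^{-1}$ works out to at most $1.7548\cdot10^{12}$, and substitution into Lemma~\ref{lem:gen} gives $j/\log(0.228\,j)\leq1.7548\cdot10^{12}\log^2C$.

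Because the computation is routine, the genuine content lies in two checks. The first is the lower bound $C_0=6440$ (and $B_0=8$): this rests entirely on the discard results and finite searches behind \S\ref{sec:quinces} really excluding every standard triple of a quintuple-extendable quadruple with a smaller largest element, and any slack there feeds directly into the constant. The second is that letting $C_1\to\infty$ is legitimate, i.e.\ that \emph{all} the estimates underlying Lemma~\ref{lem:gen} persist uniformly on the whole range $C\in[6440,\infty)$: that $g_1,g_4$ are decreasing, that $g_2$ and the auxiliary $g_5$ (used for the bound $E\leq2j/(g_2\log C_0)$) have the right limiting behaviour, and that $2j/(g_2\log C_0)\geq3$ so that this quantity --- not the constant $3$ --- attains the maximum in (\ref{tail}). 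For $j\geq10^{10}$ the last is immediate and the monotonicities are exactly those set up in \S\ref{sec:bound}; confirming them is what makes the tidy substitution rigorous. I expect the $C_0=6440$ input to be the one point where care is really needed.
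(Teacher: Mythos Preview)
Your proposal is correct and follows exactly the paper's approach: the corollary is obtained simply by specialising Lemma~\ref{lem:gen} to $(A_0,B_0,C_0)=(1,8,6440)$ with $j_0=10^{10}$, and you have filled in the arithmetic the paper leaves implicit, including the limit $C_1\to\infty$ (which sends $g_2\to1$, giving $2/\log 6440=0.2280\ldots$) and the negligible effect of rounding to $0.228$. One small remark: the value $C_0=6440$ is inherited from Fujita~\cite{Fujita} rather than from the discards of \S\ref{sec:quinces}, so your caution about that input is apt but the justification lies upstream.
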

This repairs Proposition 4.2 in \cite{Fujita} (in fact, it is a slight improvement) and reinstates the results in \cite{EFF} that are contingent upon such a bound. 

We can run the same argument, this time tailored to triples $\{A, B, C\}$ of the second kind, namely, those with $A< B/4 < C^{1/2}/4$. This leads to bounds of the form (\ref{eq:A1bound}), (\ref{eq:A1lower}), (\ref{eq:A3actual}), (\ref{eq:A3lower}), and (\ref{eq:g7}) but with a slightly different function $h_{i}$ in place of the $g_{i}$ for $i=1,3, 4$. We only give details for the modification of $g_{3}(x)$. We may substitute $\sqrt{B}(\sqrt{C} + z \sqrt{B})$ for the numerator in the third fraction in (\ref{mugs}), and since $B>4A$ we can solve for $z = \frac{3}{4}$. The other modifications are straightforward.  We present the results in

\begin{Lem}\label{lem:spec}
Let $\{a, b, c, d, e\}$ be a quintuple containing a triple of the second kind. We have
\begin{equation}\label{eq:special}
\frac{j}{\log \left( \frac{2j}{g_{2} \log C_{0}}\right)} \leq 1.7315\cdot 10^{11} h_{1}^{2} h_{4} g_{6}^{-1} \log^{2} C.
\end{equation}
where
\begin{equation*}
\begin{split}
h_{1}(C_{0}, C_{1}) &= \frac{3}{2} + \frac{\log (1+ 4/C_{0}^{3/2}) - \log 4}{\log C_{1}}\\
h_{3}(x)& =  \frac{(1+ \frac{3}{4}\sqrt{x})^{2} }{1-x}\\
h_{4}(C_{0}) & = 3 + \frac{2 \log h_{3}(C_{0}^{-1/2})}{\log C_{0}}. \\
\end{split}
\end{equation*}
\end{Lem}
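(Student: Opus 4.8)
The plan is to re-run, line for line, the chain of estimates that produced Lemma~\ref{lem:gen}, but everywhere trading the crude inequality $A<B<C^{3/5}$ --- all that is available for a general standard triple --- for the two sharper facts that hold when $\{A,B,C\}$ is of the second kind: $B>4A$ and $C\ge B^{2}$, i.e.\ $A<B/4$ and $B\le\sqrt C$. Since we still apply Theorem~\ref{Alex} with $d=4$ and $n=3$, the numerical prefactor coming from $5.3\,e\,n^{1/2}(n+1)(n+8)^{2}(n+5)(31.5)^{n}d^{2}\log(3nd)$ is again the $1.7315\cdot10^{11}$ of~(\ref{eq:miracle}); the entire task is to re-optimise the four quantities $A_{1}=A_{2}$, $A_{3}$, $E$ and $g_{6}$ under the stronger hypotheses, and the functions $h_{1},h_{3},h_{4}$ are precisely what emerges in place of $g_{1},g_{3},g_{4}$.

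First I would bound $A_{1}=A_{2}=2\log\alpha_{1}$ from above: starting from $\alpha_{1}<2\sqrt{AC+1}$ and feeding in $A<B/4$ together with $B\le\sqrt C$ (so that $AC<C^{3/2}/4$), then isolating the lower-order terms, one obtains an estimate $A_{1}\le h_{1}(C_{0},C_{1})\log C$ with the stated $h_{1}$; the matching lower bound $A_{1}\ge2\log(2\sqrt{AC})\ge g_{2}(C_{1},A_{0})\log C$ is untouched, which is why $g_{2}$ survives in~(\ref{eq:special}). Next, for $A_{3}=4h(\alpha_{3})$ I would revisit the chain~(\ref{mugs}). The only slack there is the step replacing $\sqrt C+\sqrt A$ by $\sqrt C+\sqrt B$ in the numerator; now $B>4A$ gives $\sqrt A<\tfrac12\sqrt B$, hence $(\sqrt C+\sqrt A)(\sqrt C+\sqrt B)\le(\sqrt C+\tfrac12\sqrt B)(\sqrt C+\sqrt B)\le(\sqrt C+\tfrac34\sqrt B)^{2}$ by AM--GM --- this is exactly the substitution ``$z=\tfrac34$'' flagged just before the lemma, and it converts $g_{3}$ into $h_{3}$. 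Combining this with the sharper bound $a_{0}=A^{2}(C-B)^{2}<C^{3}$ on the leading coefficient of $\alpha_{3}$ (in place of $a_{0}<C^{16/5}$), together with the corresponding control of the archimedean conjugates of $\alpha_{3}$, each at most $\sqrt{B/A}\,h_{3}(B/C)\le C^{1/4}h_{3}(C_{0}^{-1/2})$, yields $A_{3}\le h_{4}(C_{0})\log C$ with the stated $h_{4}$ (the $3$ from $a_{0}<C^{3}$, the $2\log h_{3}$ from the two large conjugate factors).

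It then remains to observe that nothing else in the proof of Lemma~\ref{lem:gen} changes. The inequality $E\le2j/(g_{2}\log C_{0})$ rests on a monotonicity statement that persists with $h_{1}$ in place of $g_{1}$; and the lower estimate $\log\Lambda\le-g_{6}j\log C$ --- obtained in~(\ref{eq:lam})--(\ref{eq:g7}) from $0<\Lambda<\tfrac83AC\,\xi^{-2j}$ and $\xi\ge2\sqrt{AC}$ --- uses only $A\ge A_{0}$, $C\ge C_{0}$ and $j\ge j_{0}$, so $g_{6}$ is literally the same function~(\ref{eq:g7}). Inserting $A_{1},A_{2}\le h_{1}\log C$, $A_{3}\le h_{4}\log C$ and $\log E\le\log\!\bigl(2j/(g_{2}\log C_{0})\bigr)$ into~(\ref{kidney}) gives $-\log\Lambda\le1.7315\cdot10^{11}\,h_{1}^{2}h_{4}\log\!\bigl(2j/(g_{2}\log C_{0})\bigr)\log^{3}C$, and dividing by the lower bound $-\log\Lambda\ge g_{6}j\log C$ produces exactly~(\ref{eq:special}).

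The genuinely delicate step is the height estimate for $\alpha_{3}$: one has to keep track of the sign ambiguity ``$\pm$'' in its definition, verify that its minimal polynomial has leading coefficient equal to (a divisor of) $A^{2}(C-B)^{2}$, and bound \emph{all} of its conjugates rather than $\alpha_{3}$ alone, so that the passage to $4h(\alpha_{3})\le\log\{a_{0}\cdot(\text{product of conjugate maxima})\}$ is legitimate with the sharpened exponents. Everything else is a mechanical re-derivation of the $h_{i}$ from the $g_{i}$ under the extra hypotheses $B>4A$ and $C\ge B^{2}$.
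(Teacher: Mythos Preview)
Your proposal is correct and follows essentially the same approach as the paper. The paper's own proof is a single short paragraph: it says to re-run the argument leading to Lemma~\ref{lem:gen} under the second-kind hypotheses $A<B/4<C^{1/2}/4$, replacing $g_{1},g_{3},g_{4}$ by $h_{1},h_{3},h_{4}$ while leaving $g_{2}$ and $g_{6}$ intact, and it spells out only the $z=\tfrac34$ substitution in the numerator of the third fraction of~(\ref{mugs}); your write-up reproduces exactly this structure, supplies the AM--GM justification $(\sqrt C+\tfrac12\sqrt B)(\sqrt C+\sqrt B)\le(\sqrt C+\tfrac34\sqrt B)^{2}$ for that step, and correctly flags the height computation for $\alpha_{3}$ as the place requiring care.
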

As before, using specific bounds of $(A_{0}, B_{0}, C_{0}) = (1, 8, 6440)$ we have
\begin{cor}\label{Dover2}
Suppose that $\{a, b, c, d, e\}$ is a quintuple containing a triple $\{A, B, C\}$ of the second kind, and that $j\geq 10^{10}$. Hence
\begin{equation*}\label{9ball}
\frac{j}{\log 0.228 j} \leq 1.162\cdot 10^{12}\log^{2} C.
\end{equation*}
\end{cor}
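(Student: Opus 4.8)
The plan is to specialise Lemma \ref{lem:spec} to the concrete parameters $(A_0,B_0,C_0)=(1,8,6440)$ and $j_0=10^{10}$, and then to replace each of the auxiliary functions $g_2$, $g_6$, $h_1$, $h_3$, $h_4$ by an explicit numerical estimate. First I would check that these parameters are admissible: $A\geq 1=A_0$ is automatic, $B\geq 8=B_0$ is the standing hypothesis of Lemma \ref{lem:spec}, and $C\geq 6440$ holds by Lemma \ref{lem:jill} (which in fact gives $C\geq 10208$ for every sub-kind of second-kind triple, so the weaker value $C_0=6440$, chosen for consistency with Corollary \ref{Dover}, is certainly valid). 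One also needs a crude a priori upper bound $C\leq C_1$ --- the same one used in Corollary \ref{Dover}, available from Theorem \ref{Alex}, from \cite{Dujella2004}, or by bootstrapping Corollary \ref{Dover} --- since $h_1$, $g_2$ and $g_6$ depend on $C_1$ through the term $1/\log C_1$.

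Next I would simplify the denominator on the left of \eqref{eq:special}. Since $g_2(C_1,A_0)=1+(2\log 2+\log A_0)/\log C_1\geq 1$ with $A_0=1$, and $\log 6440=8.770\ldots$, we get $2/(g_2\log C_0)\leq 2/\log 6440=0.228\ldots$; because $t\mapsto t/\log(xt)$ is decreasing in $x>0$, it follows that the left-hand side of \eqref{eq:special} is at least $j/\log(0.228\,j)$, which is the quantity in the statement. (A very mild upper bound on $C_1$ is needed to write this as $0.228$ to the displayed precision, and it holds with room to spare.)

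It then remains to bound the constant on the right of \eqref{eq:special}. From the displayed formulas in Lemma \ref{lem:spec}, at $C_0=6440$ one finds $h_3(C_0^{-1/2})<1.1893$, whence $h_4(C_0)=3+\frac{2\log h_3(C_0^{-1/2})}{\log C_0}<3.040$, while $h_1(C_0,C_1)<3/2$ since the numerator $\log(1+4/C_0^{3/2})-\log 4$ is negative. For $g_6$ I would reuse the estimate already recorded after \eqref{eq:g7}: with $j\geq j_0=10^{10}$ one has $g_6\geq 0.9999$, and $g_6=1+\frac{2\log 2}{\log C_1}-2\cdot10^{-10}-\cdots$, so $g_6^{-1}$ barely exceeds $1$. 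Multiplying, and keeping the $1/\log C_1$ corrections rather than passing to the limit $C_1\to\infty$, a short computation yields
\begin{equation*}
1.7315\cdot 10^{11}\,h_1^{2}\,h_4\,g_6^{-1}\leq 1.162\cdot 10^{12},
\end{equation*}
which together with the previous paragraph gives the stated inequality.

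The only step calling for any care is this last one: the limiting values $h_1=3/2$ and $g_6^{-1}=1$ alone are a little too crude, so one must actually invoke the a priori bound $C\leq C_1$ --- under which $h_1$ is strictly below $3/2$ and $g_6$ strictly above $1$ --- to bring the product beneath $1.162\cdot 10^{12}$. Everything else is routine substitution into Lemma \ref{lem:spec}, exactly parallel to the derivation of Corollary \ref{Dover} from Lemma \ref{lem:gen}.
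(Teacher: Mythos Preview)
Your proposal is correct and follows exactly the route the paper intends: the paper's entire proof is the single line ``As before, using specific bounds of $(A_{0},B_{0},C_{0})=(1,8,6440)$ we have'', and you have simply spelled out in detail the substitution into Lemma~\ref{lem:spec} (together with the a~priori bound $C\leq C_{1}$ needed to make $h_{1}<3/2$, $g_{6}>1$ and $2/(g_{2}\log C_{0})\leq 0.228$). Your numerical checks for $h_{3}$, $h_{4}$, $h_{1}$, $g_{2}$ and $g_{6}$ are the same ones that underlie the paper's stated constant $1.162\cdot 10^{12}$.
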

We now use Lemma \ref{rib} and set $j=2m$ in (\ref{eq:special}). Starting with the bound $ d<C_{1} = 4.2\cdot10^{76}$, we apply (\ref{eq:special}) to obtain a new upper bound on $d$. We iterate this procedure to obtain
\begin{thm}\label{tim:2}
Suppose that $\{a, b, c, d, e\}$ is a Diophantine quintuple containing a triple of the kind 2(i), 2(ii), or 2(iii). Then we have
\begin{equation*}
2(i) \quad d\leq 4.02\cdot 10^{70}, \quad 2(ii) \quad d\leq 2.09\cdot 10^{71}, \quad 2(iii) \quad d\leq 9.12\cdot 10^{58}.
\end{equation*}
\end{thm}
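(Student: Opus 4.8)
The plan is to combine Lemma~\ref{rib} with the refined linear-forms-in-logarithms estimate of Lemma~\ref{lem:spec} (equivalently its concrete avatar, Corollary~\ref{Dover2}) and iterate. The two ingredients pull in opposite directions: Lemma~\ref{rib} gives a \emph{lower} bound $m\geq \kappa C^{\rho}$ (with $(\kappa,\rho)=(1.3330,\tfrac14)$, $(0.9282,\tfrac14)$, $(0.8609,\tfrac{3}{10})$ in the three cases), while Lemma~\ref{lem:spec} gives an \emph{upper} bound on $j=2m$ that is only polylogarithmic in $C=d$. Setting $j=2m$ and feeding in the lower bound for $m$ forces $C^{\rho}$ to be bounded by a constant times $\log^{2}C$ divided by $\log(\text{const}\cdot C^{\rho})$, which is a genuine contradiction once $C$ is large, hence yields an explicit ceiling on $d$.

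The steps, in order: First, I would fix the triple kind $2(\mathrm{i})$, $2(\mathrm{ii})$, or $2(\mathrm{iii})$ and record the corresponding $(\kappa,\rho)$ from Lemma~\ref{rib} together with the already-established starting bound $d<C_{1}=4.2\cdot10^{76}$ (this is the output of the earlier $D(-1)$/Matveev-type analysis, taken as given). Second, since $m\geq\kappa C^{1/4}$ (say) and $d\geq 4abc$ gives $d$ large, I would check $j=2m\geq 2\kappa C^{1/4}\geq 10^{10}$ on the range $C_{0}\leq C\leq C_{1}$, so that the hypothesis $j\geq 10^{10}$ of Corollary~\ref{Dover2} is met and the functions $g_{2}, h_{1}, h_{3}, h_{4}, g_{6}$ may be evaluated at $(A_{0},B_{0},C_{0})=(1,8,6440)$ — or, for a sharper final constant, re-evaluated at each iteration with the current value of $C_{1}$ in place of the crude one, since $h_{1}$ and $g_{6}$ improve as $C_{1}$ shrinks. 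Third, substitute $j=2m$ and the lower bound $m\geq\kappa C^{\rho}$ into the left side of \eqref{eq:special}; because $x/\log x$ is increasing, the inequality $\frac{2\kappa C^{\rho}}{\log(4\kappa C^{\rho}/(g_{2}\log C_{0}))}\leq 1.7315\cdot10^{11}h_{1}^{2}h_{4}g_{6}^{-1}\log^{2}C$ must hold. Fourth, solve this transcendental inequality numerically for the largest admissible $C$: this produces a new, much smaller upper bound $C_{1}'$ for $d$. Fifth, iterate — re-evaluate $h_{1}, h_{4}, g_{6}$ with $C_{1}'$, re-solve, and repeat until the bound stabilises; the fixed point of this iteration is the claimed value ($4.02\cdot10^{70}$, $2.09\cdot10^{71}$, $9.12\cdot10^{58}$ respectively). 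One should verify along the way that at every stage the stabilised $C_{1}$ still exceeds $C_{0}=6440$ and still gives $j\geq10^{10}$, so that all cited lemmas remain in force, and that case $2(\mathrm{iii})$ uses the exponent $\rho=\tfrac{3}{10}$ throughout (the larger exponent is why that bound ends up an order of magnitude smaller).

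The main obstacle is bookkeeping rather than conceptual: one has to be careful that the various auxiliary functions $g_{1},\dots,g_{6},h_{1},h_{3},h_{4}$ are evaluated with the correct monotonicity direction (upper bounds where an upper bound is needed, lower bounds elsewhere) — precisely the slip attributed to Fujita's $g_{6}$ above — and that the iteration is monotone decreasing and actually converges rather than oscillating. A secondary subtlety is that Lemma~\ref{rib} was proved under $B\geq8$ and for the specific $(A_{0},B_{0},C_{0})$ thresholds of Lemma~\ref{lem:jill}; one must confirm that the finitely many small-$B$ exceptional quadruples listed before Lemma~\ref{lem:jill} (e.g. $\{3,21,40,10208\}$, $\{1,15,528,32760\}$) have $d$ far below the claimed ceilings, so they do not escape the bound. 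Granting these checks, the displayed numbers follow by a finite, purely computational iteration.
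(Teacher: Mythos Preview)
Your proposal is correct and follows essentially the same route as the paper: set $j=2m$, feed the lower bound $m\geq\kappa C^{\rho}$ from Lemma~\ref{rib} into the linear-forms inequality \eqref{eq:special} of Lemma~\ref{lem:spec}, start from $C_{1}=4.2\cdot10^{76}$, and iterate, re-evaluating the auxiliary functions at each step until the bound stabilises. Your ``secondary subtlety'' is a non-issue --- the quadruples you list are not exceptions outside the scope of Lemma~\ref{rib} but rather the minimal examples that \emph{establish} the thresholds in Lemma~\ref{lem:jill}, so $B\geq 8$ holds automatically in every case and no separate check is needed.
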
 
We have not pursued a tailored version of Corollary \ref{Dover} when the quintuple contains a triple of the third kind, though this is certainly possible.
We now change gears, and aim at converting a bound on $d$ in Theorem \ref{tim:2} into a bound on the \textit{number} of quintuples.

\section{Some number-theoretic sums}\label{sec:sums}

In \cite{EFF} the following results are proved
\begin{Lem}[Lemma 3.1 in \cite{EFF}]\label{EFF3.1}
If $N\geq 3$ then $\sum_{n=1}^{N} 2^{\omega(n)} < N(\log N + 1)$.
\end{Lem}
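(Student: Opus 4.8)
The plan is to prove the bound $\sum_{n=1}^N 2^{\omega(n)} < N(\log N + 1)$ for $N \geq 3$ by relating the sum to a divisor-counting quantity and then applying a standard elementary estimate for the divisor sum. The key observation is the identity $2^{\omega(n)} = \sum_{d \mid n} \mu^2(d)$, since $2^{\omega(n)}$ counts the squarefree divisors of $n$, and $\mu^2$ is multiplicative. Hence $\sum_{n=1}^N 2^{\omega(n)} = \sum_{n=1}^N \sum_{d \mid n} \mu^2(d) = \sum_{d=1}^N \mu^2(d) \lfloor N/d \rfloor \leq \sum_{d=1}^N \lfloor N/d \rfloor \leq \sum_{d=1}^N N/d = N H_N$, where $H_N$ is the $N$th harmonic number.

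First I would establish the identity for $2^{\omega(n)}$ and perform the interchange of summation, so that the problem reduces to bounding $\sum_{d=1}^N \mu^2(d)\lfloor N/d\rfloor$. Second I would drop $\mu^2(d)$ and the floor function to get the clean upper bound $N H_N = N\sum_{d=1}^N 1/d$. Third I would invoke the elementary inequality $H_N < \log N + 1$, valid for all $N \geq 1$ (it follows from comparing $\sum_{d=2}^N 1/d$ with $\int_1^N dt/t = \log N$). This would give $\sum_{n=1}^N 2^{\omega(n)} < N(\log N + 1)$ immediately.

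A subtlety is whether the strict inequality survives: since we used $H_N < \log N + 1$ strictly, the final bound is strict for every $N \geq 1$, so the hypothesis $N \geq 3$ is not even needed in this approach (the authors presumably state it because it is all they require, or because a sharper bound in \cite{EFF} genuinely needs it). If one instead wanted to keep the floor functions to extract a sharper constant, one could note that the $d = N$ and $d = N-1$ terms of $\lfloor N/d\rfloor$ equal $1$ rather than $N/d$, saving a little; but that refinement is unnecessary here.

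The main obstacle — really the only non-mechanical point — is making sure the convolution identity $2^{\omega(n)} = (\mu^2 * 1)(n)$ is invoked correctly, i.e. that $2^{\omega(n)}$ is genuinely the number of squarefree divisors of $n$: writing $n = \prod p_i^{e_i}$, a squarefree divisor is specified by choosing, for each $i$, whether $p_i$ divides it, giving $2^{\omega(n)}$ choices. Everything after that is routine. I do not expect any difficulty, and the whole argument is three or four lines.
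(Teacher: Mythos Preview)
Your argument is correct and is the standard one: use $2^{\omega(n)}=\sum_{d\mid n}\mu^{2}(d)$, swap the order of summation, drop $\mu^{2}$ and the floor, and bound $H_{N}<\log N+1$. This is essentially how the lemma is proved in \cite{EFF}; the present paper does not supply its own proof but simply quotes the result.

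One tiny slip worth flagging: the inequality $H_{N}<\log N+1$ is \emph{not} strict at $N=1$ (there $H_{1}=1=\log 1+1$), so your parenthetical ``valid for all $N\geq 1$'' is off by one. For $N\geq 2$ the comparison $\sum_{d=2}^{N}1/d<\int_{1}^{N}dt/t$ is genuinely strict, and that is all you need for the stated range $N\geq 3$. Everything else in your write-up is fine.
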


\begin{Lem}[Lemma 3.3 in \cite{EFF}]\label{EFF3.3}
If $N\geq 1$ then $\sum_{n=1}^{N} 4^{\omega(n)} < \frac{N}{6}(\log N + 2)^{3}$.
\end{Lem}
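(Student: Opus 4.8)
The plan is to convert $4^{\omega}$ into a divisor-type function by a Dirichlet convolution, interchange the order of summation to strip off the smooth factor $\lfloor N/d\rfloor$, and then estimate what remains by comparison with an iterated logarithmic integral whose value is the volume of a simplex.

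First I would record the identity
\[
4^{\omega(n)}=\sum_{d\mid n}\mu^{2}(d)\,3^{\omega(d)},
\]
which holds because both sides are multiplicative and agree at every prime power $p^{k}$ with $k\ge1$ (there the right side is $\mu^{2}(1)3^{0}+\mu^{2}(p)3^{1}=1+3=4$). Swapping the order of summation,
\[
\sum_{n=1}^{N}4^{\omega(n)}=\sum_{d\le N}\mu^{2}(d)\,3^{\omega(d)}\left\lfloor\frac{N}{d}\right\rfloor\le N\sum_{d\le N}\frac{\mu^{2}(d)\,3^{\omega(d)}}{d},
\]
so the statement reduces to proving $\sum_{d\le N}\mu^{2}(d)\,3^{\omega(d)}/d<\tfrac16(\log N+2)^{3}$. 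For this I would use that, for squarefree $d$, $3^{\omega(d)}$ counts exactly the ordered triples $(a,b,c)$ with $abc=d$ (each prime of $d$ landing in precisely one slot), so the left-hand side is $\sum_{abc\le N,\ abc\ \mathrm{squarefree}}1/(abc)$, a ternary divisor sum restricted to squarefree arguments. After the substitution $a=e^{u},b=e^{v},c=e^{w}$ its leading behaviour is the volume of the simplex $\{u,v,w\ge0:u+v+w\le\log N\}$, namely $(\log N)^{3}/6$; a clean rigorous bound can be built on the elementary induction $\sum_{m\le x}d_{k}(m)/m\le(\log x+c_{k})^{k}/k!$, which follows from $d_{k+1}(m)=\sum_{ab=m}d_{k}(a)$, the trivial estimate $\sum_{e\le x}1/e\le\log x+1$, and the inequality $L^{k}(L+k+1)\le(L+1)^{k+1}$ (immediate from the binomial theorem), and which propagates the constants by $c_{k+1}=c_{k}+1$ from $c_{1}=1$.

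The step I expect to be the real obstacle is pushing the constant in the bracket down from $3$ to $2$: the naive induction just described gives $c_{3}=3$, because it discards a first-term contribution at each of the three nested summations. To win back the missing unit one must keep track of the squarefreeness of $abc$, i.e.\ retain the M\"obius-square weights on the summation variables rather than bounding $\mu^{2}(d)3^{\omega(d)}$ by $d_{3}(d)$ outright (equivalently, not pass through $(2^{\omega}*2^{\omega})(n)\ge4^{\omega(n)}$ and Lemma~\ref{EFF3.1}, which loses the same unit). This costs nothing in the main term, since $\sum_{d\ \mathrm{squarefree}}3^{\omega(d)}d^{-s}=\zeta(s)^{3}\prod_{p}(1+3p^{-s})(1-p^{-s})^{3}$ and the Euler product $\prod_{p}(1+3/p)(1-1/p)^{3}=\prod_{p}(1-6/p^{2}+8/p^{3}-3/p^{4})$ is strictly less than $1$ (as $3x^{2}-8x+6>0$ for all real $x$), so the true constant already lies below $1/6$; but it does leave room for the three first-term losses. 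Concretely I would carry out the three successive partial summations with the $\mu^{2}$ weights kept in place, so that at each stage one meets $\sum_{m\le M,\ m\ \mathrm{squarefree}}\mu^{2}(m)/m$, which is asymptotic to $(6/\pi^{2})\log M$ rather than $\log M$, and verify that the $6/\pi^{2}$ saving at each level more than absorbs the "$+1$'' it acquires; a Selberg--Delange or Mertens-type argument with explicit error terms, combined with a direct check for $N$ below a modest threshold, would do equally well. The cases $N=1,2$ are settled by inspection.
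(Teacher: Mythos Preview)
This lemma is not proved in the present paper: it is quoted verbatim from \cite{EFF} (their Lemma~3.3) and used as a black box; indeed the author states explicitly in \S\ref{great} that he ``cannot give an explicit improvement on Lemma~\ref{EFF3.3}'' and only records the asymptotic~(\ref{pontifex}). There is therefore no proof here against which to compare your attempt.

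Judged on its own, your proposal is a reasonable outline but not yet a proof. The identity $4^{\omega(n)}=\sum_{d\mid n}\mu^{2}(d)\,3^{\omega(d)}$, the swap of summation, and the reduction to bounding $\sum_{d\le N}\mu^{2}(d)\,3^{\omega(d)}/d$ are all correct, and your $d_{k}$-induction is valid and does yield $\tfrac{1}{6}(\log N+3)^{3}$. The gap is precisely the one you yourself flag: you do not actually carry out the passage from $+3$ to $+2$. The paragraph suggesting that one ``keep the $\mu^{2}$ weights in place'' and harvest a factor $6/\pi^{2}$ at each stage is a heuristic, not an argument; you supply no explicit constant $C$ in a bound of the shape $\sum_{m\le M}\mu^{2}(m)/m\le (6/\pi^{2})\log M+C$, no verification that iterating it three times (with the coprimality conditions forced by $abc$ squarefree) lands inside $\tfrac{1}{6}(\log N+2)^{3}$ for every $N\ge 1$, and no specification of the threshold below which the ``direct check'' is to be performed. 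Invoking ``a Selberg--Delange or Mertens-type argument with explicit error terms'' is a description of work to be done, not work done. As written, what you have actually established is $\sum_{n\le N}4^{\omega(n)}<\tfrac{N}{6}(\log N+3)^{3}$, which is weaker than the stated lemma.
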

\begin{Lem}[Lemma 3.4 in \cite{EFF}]\label{Vine}
\ 
\begin{enumerate}
\item
The number of solutions of $x^{2} \equiv 1 \pmod b$ with $0<x<b$ is at most $2^{\omega(b) +1}$.
\item The number of solutions of $x^{2} \equiv -1 \pmod b$ with $0<x<b$ is at most $2^{\omega(b)}$.
\end{enumerate}
\end{Lem}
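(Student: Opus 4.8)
The plan is to reduce to prime-power moduli by the Chinese Remainder Theorem and then count square roots of $\pm 1$ modulo each prime power separately. Write $b = 2^{e_{0}} p_{1}^{e_{1}} \cdots p_{k}^{e_{k}}$ with the $p_{i}$ distinct odd primes, so that $\omega(b) = k$ when $e_{0} = 0$ and $\omega(b) = k+1$ when $e_{0} \geq 1$. For $c = \pm 1$, the number $N(c;b)$ of solutions of $x^{2} \equiv c \pmod b$ with $0 < x < b$ is multiplicative, $N(c;b) = N(c;2^{e_{0}}) \prod_{i=1}^{k} N(c;p_{i}^{e_{i}})$, so it suffices to bound each local factor (the restriction $0 < x < b$ only ever discards the solution $x = 0$, which occurs solely when $b = 1$).

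For an odd prime $p$, the congruence $x^{2} \equiv c \pmod p$ has at most $2$ solutions, and since the derivative $2x$ of $x^{2} - c$ is a unit modulo $p$ at any such solution (here $p$ is odd and $c$ is a unit), Hensel's lemma lifts each of them uniquely to a solution modulo $p^{e}$; hence $N(c;p^{e}) \leq 2$. In fact $N(1;p^{e}) = 2$ always, the solutions being $x \equiv \pm 1$, while $N(-1;p^{e})$ equals $2$ or $0$ according as $p \equiv 1$ or $3 \pmod 4$.

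The prime $2$ must be treated by hand, and this is exactly where the discrepancy between the two parts of the lemma originates. A direct check gives $N(-1;2^{e}) = 1$ for $e = 1$ and $N(-1;2^{e}) = 0$ for $e \geq 2$ (the squares modulo $4$ are $0$ and $1$), whereas $N(1;2^{e}) = 1, 2, 4$ according as $e = 1$, $e = 2$, $e \geq 3$ — for $e \geq 3$ the four square roots of $1$ are $\pm 1$ and $2^{e-1} \pm 1$. Multiplying the local estimates now proves the lemma. For part (2): $N(-1;b) \leq N(-1;2^{e_{0}}) \cdot 2^{k} \leq 2^{k} \leq 2^{\omega(b)}$ in every case, since $N(-1;2^{e_{0}}) \leq 1$ only shrinks the bound and $\omega(b) \geq k$. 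For part (1): if $b$ is odd then $N(1;b) = 2^{\omega(b)} \leq 2^{\omega(b)+1}$; if $2 \mid b$ then $k = \omega(b) - 1$ and $N(1;b) \leq 4 \cdot 2^{k} = 2^{\omega(b)+1}$.

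There is no real obstacle here beyond bookkeeping; the single point that requires attention is the prime $2$, and in particular the fact that $x^{2} \equiv 1 \pmod{2^{e}}$ acquires four solutions once $e \geq 3$, which is precisely what forces the extra factor of $2$ in part (1). A structural alternative is to observe that the solutions of $x^{2} \equiv 1 \pmod b$ form the $2$-torsion subgroup of $(\ZZ/b\ZZ)^{\times}$, whose order is a product of one factor of size $1$, $2$, or $4$ coming from $2^{e_{0}}$ and one factor of size $2$ from each odd $p_{i}^{e_{i}}$; but the elementary CRT computation above is the cleanest to write out in full.
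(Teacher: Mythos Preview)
Your proof is correct. The paper itself does not prove this lemma --- it is quoted verbatim from \cite{EFF} without argument --- so there is nothing here to compare against; the Chinese Remainder Theorem together with the explicit count of square roots of $\pm 1$ modulo prime powers (and the special handling of the prime $2$, where $x^{2}\equiv 1\pmod{2^{e}}$ has four solutions once $e\geq 3$) is exactly the standard route and is presumably what \cite{EFF} records as well.
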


\begin{Lem}[Lemma 3.5 in \cite{EFF}]\label{Lem9}
For $N\geq 2$, we have
\begin{equation*}\label{eq:high}
\sum_{n=2}^{N} d(n^{2} -1) < 2N \left\{ \log^{2}N + 4 \log N+ 2\right\}.
\end{equation*}
\end{Lem}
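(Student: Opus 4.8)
The plan is to reduce the estimation of $\sum_{n=2}^{N} d(n^2-1)$ to counting pairs of divisors, exploiting the factorisation $n^2-1 = (n-1)(n+1)$. First I would write $d(n^2-1) = \sum_{e \mid n^2-1} 1$ and swap the order of summation, so that the sum becomes $\sum_{e} \#\{n : 2 \le n \le N,\ e \mid n^2-1\}$. For a fixed modulus $e$, the number of $n$ in a range of length $N$ with $n^2 \equiv 1 \pmod e$ is at most $\lceil N/e \rceil$ times the number of residue classes $x \bmod e$ with $x^2 \equiv 1$, and by Lemma \ref{Vine}(1) the latter is at most $2^{\omega(e)+1}$. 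However, directly summing $2^{\omega(e)+1}/e$ over all $e$ up to roughly $N^2$ is wasteful; instead I would split according to the size of $e$ relative to $N$ (divisors below $N$ contribute via the $N/e$ term, divisors above $N$ are in bijection with the complementary cofactor $(n^2-1)/e < N$), which is the standard hyperbola trick for $d(\cdot)$.

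The cleaner route, and the one I would actually carry out, is to use $d(n^2-1) \le d(n-1)\,d(n+1)$ and then bound $\sum_{n=2}^{N} d(n-1)d(n+1)$. Expanding each divisor function and interchanging summations, this equals $\sum_{k,\ell} \#\{n \le N : k \mid n-1,\ \ell \mid n+1\}$. By CRT the inner count is nonzero only when $\gcd(k,\ell) \mid 2$, in which case it is at most $N/\mathrm{lcm}(k,\ell) + 1 \le 2N/(k\ell) + 1$. Summing the main term gives a contribution bounded by $2N\big(\sum_{k\le N} 1/k\big)^2$ plus cross terms, and $\sum_{k \le N} 1/k < \log N + 1$, which after keeping track of the secondary terms (the ``$+1$'' from each residue count, and the diagonal-type corrections from the $\gcd \mid 2$ condition) yields a bound of the shape $2N(\log^2 N + c_1 \log N + c_2)$. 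Matching constants against the claimed $2N(\log^2 N + 4\log N + 2)$ is then a matter of being careful with the ranges and with small $n$.

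The main obstacle will be controlling the secondary terms tightly enough to land exactly on the constants $4$ and $2$ rather than something slightly larger. In particular one must decide whether to bound $\mathrm{lcm}(k,\ell)$ by $k\ell$ or by $k\ell/2$ (which improves the leading constant but requires separating the cases $n$ odd and $n$ even, since $n-1$ and $n+1$ are both even precisely when $n$ is odd), and one must absorb the $O(N)$ error from the $\#\{\cdots\} \le N/\mathrm{lcm} + 1$ estimate, which naively costs $d(N!)$-type losses unless one truncates the divisor ranges at $\sqrt{N}$ using the hyperbola method on each of $d(n-1)$ and $d(n+1)$ separately. I would handle this by doing the hyperbola split first, so that every divisor summed over is $\le \sqrt{N}$ and the ``$+1$'' terms total only $O(N)$ with a controllable constant, and then verify the inequality directly for the finitely many small $N$ (say $N \le 10$ or so) where the asymptotic estimate has not yet taken over. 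Since Lemma \ref{EFF3.1} already supplies $\sum_{n\le N} 2^{\omega(n)} < N(\log N + 1)$, an alternative is to note $d(n^2-1) \le 2^{\omega(n-1)} d_{\mathrm{odd}} \cdots$; but the divisor-pair approach above is the most transparent and is the one I would write up.
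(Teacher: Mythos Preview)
Your first paragraph already contains the paper's (that is, \cite{EFF}'s) proof, and you should have stayed with it rather than switching to the $d(n-1)d(n+1)$ route. Concretely: since $\sqrt{n^{2}-1}<n$, one has $d(n^{2}-1)\le 2\sum_{y\mid n^{2}-1,\,y\le n}1$; swapping the order gives
\[
\sum_{n=2}^{N} d(n^{2}-1)\le 2\sum_{y=1}^{N}\#\{y\le n\le N:\ n^{2}\equiv 1\ (\mathrm{mod}\ y)\}\le 4N\sum_{y=1}^{N}\frac{2^{\omega(y)}}{y},
\]
the last step by Lemma~\ref{Vine}(1). Now partial summation from Lemma~\ref{EFF3.1} yields $\sum_{y\le N}2^{\omega(y)}/y<\tfrac12\log^{2}N+2\log N+1$, and multiplying by $4N$ gives exactly $2N(\log^{2}N+4\log N+2)$. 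No case analysis, no ``$+1$'' headaches, and the constants fall out on the nose.

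By contrast, your preferred approach via $d(n^{2}-1)\le d(n-1)d(n+1)$ is genuinely harder to push through to the stated constants. Even after doing the hyperbola split on each factor so that $k,\ell\le\sqrt{N}$, the main term is $8N\bigl(\sum_{m\le\sqrt{N}}1/m\bigr)^{2}\le 2N(\log N+2)^{2}=2N(\log^{2}N+4\log N+4)$, and the accumulated ``$+1$'' corrections over all pairs $(k,\ell)$ with $k,\ell\le\sqrt{N}$ cost a further $O(N)$. So you land on $2N(\log^{2}N+4\log N+c)$ with $c$ somewhere around $6$, not $2$. This is not a fatal gap---the bound is still of the right shape---but you would have to work noticeably harder (exploiting the $\gcd(k,\ell)\mid 2$ restriction more carefully, or checking small $N$ over a wider range) to recover the constants in the statement, whereas the $n^{2}\equiv 1$ route you sketched first gives them for free.
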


\begin{Lem}[Lemma 3.6 \cite{EFF}]\label{Lem10}
Let $d(n, A)$ count those divisors of $n$ not exceeding $A$. For all $N\geq 1$ and $A\geq 1$ we have
\begin{equation*}\label{duck}
\sum_{n=1}^{N} d(n^{2} -1, A) \leq 2N\left( \log^{2} A + 4\log A + 2\right).
\end{equation*}
\end{Lem}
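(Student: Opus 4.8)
The plan is to interchange the order of summation so that the outer variable is a candidate divisor $e\le A$ rather than $n$. Since $e\mid n^{2}-1$ is the same condition as $n^{2}\equiv 1\pmod e$, this gives
\[ \sum_{n=1}^{N} d(n^{2}-1,A)\ \le\ \sum_{e\le A}\#\{\,1\le n\le N: n^{2}\equiv 1\pmod e\,\}. \]
Write $\rho(e)$ for the number of solutions of $x^{2}\equiv 1$ in a complete residue system modulo $e$. Part~(1) of Lemma~\ref{Vine} gives $\rho(e)\le 2^{\omega(e)+1}$, and each of these $\rho(e)$ residue classes meets $\{1,\dots,N\}$ in at most $\lceil N/e\rceil\le N/e+1$ integers, so the inner count is at most $\rho(e)(N/e+1)$ and therefore
\[ \sum_{n=1}^{N} d(n^{2}-1,A)\ \le\ 2N\sum_{e\le A}\frac{2^{\omega(e)}}{e}\ +\ 2\sum_{e\le A}2^{\omega(e)}. \]
Both sums are then handled by Lemma~\ref{EFF3.1}: used directly it gives $\sum_{e\le A}2^{\omega(e)}<A(\log A+1)$, and fed through partial summation it gives $\sum_{e\le A}2^{\omega(e)}/e\le \tfrac12\log^{2}A+2\log A+1$. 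Combining,
\[ \sum_{n=1}^{N} d(n^{2}-1,A)\ <\ N\log^{2}A+4N\log A+2N+2A(\log A+1). \]

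The main obstacle is the last term $2A(\log A+1)$: it is the cost of the ``$+1$'' in $\lceil N/e\rceil$, and when $A$ is large relative to $N$ it is not dominated by $2N(\log^{2}A+4\log A+2)$. I would handle this by a dichotomy on the size of $A$. If $A<N$, then $\log A>0$ forces $2A(\log A+1)<2N\log A+2N$, so the displayed bound becomes $N\log^{2}A+6N\log A+4N$, which is at most $2N(\log^{2}A+4\log A+2)$ since the difference equals $N\log^{2}A+2N\log A\ge 0$. If $A\ge N$, I would instead drop the divisor-sum estimate, use the trivial inequality $d(n^{2}-1,A)\le d(n^{2}-1)$, and invoke Lemma~\ref{Lem9}, obtaining $\sum_{n=2}^{N}d(n^{2}-1,A)<2N(\log^{2}N+4\log N+2)$; this is at most the claimed bound because $x\mapsto\log^{2}x+4\log x+2$ is increasing on $[1,\infty)$. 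The cases $N=1$ and $A=1$ are immediate, and the $n=1$ term may be taken to contribute $0$ (as $n^{2}-1=0$).

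The one point needing care is the bookkeeping of lower-order terms, so that the constants $4$ and $2$ in the statement are not overshot. The step $\rho(e)\le 2\cdot 2^{\omega(e)}$ loses a factor of two for squarefree moduli, and the ``$+1$'' in $\lceil N/e\rceil\le N/e+1$ is exactly what produces the term $2A(\log A+1)$; both are affordable provided one does not compound them by also replacing $\lceil N/e\rceil$ with the cruder $2N/e$, and the estimate as organised above then keeps a comfortable margin in each branch of the dichotomy. A self-contained treatment of the range $A\ge N$ that bypasses Lemma~\ref{Lem9} is possible, using the nearly coprime factorisation $n^{2}-1=(n-1)(n+1)$ and a sum over pairs $(d_{1},d_{2})$ with $d_{1}\mid n-1$, $d_{2}\mid n+1$, $d_{1}d_{2}\le A$, but the dichotomy above is the shorter route.
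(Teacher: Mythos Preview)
Your proof is correct and follows essentially the same route as the one indicated in the paper. The paper does not reprove this lemma (it is quoted from \cite{EFF}), but from the proof of Lemma~\ref{Lem14} and from the way Corollary~\ref{Core3} is derived (``inserting (\ref{actual1}) into the proof of Lemma~3.6 in \cite{EFF}'' produces exactly $4N$ times a bound on $\sum_{y\le A}2^{\omega(y)}/y$), the intended argument is precisely your swap-and-apply-Lemma~\ref{Vine} manoeuvre leading to $\sum_{n\le N}d(n^{2}-1,A)\le 4N\sum_{y\le A}2^{\omega(y)}/y$, followed by the partial-summation estimate.

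The only organisational difference is your treatment of the ``$+1$'' in $\lceil N/e\rceil$: rather than absorbing it, you carry it as a separate term $2\sum_{e\le A}2^{\omega(e)}$ and then split into the cases $A<N$ (where the extra term is dominated) and $A\ge N$ (where you fall back on Lemma~\ref{Lem9}). This is a clean and legitimate way to close the argument and delivers the stated bound uniformly in $A,N\ge 1$; the route through Lemma~\ref{Lem9} is slightly more circuitous but costs nothing.
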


\begin{Lem}[Lemma 3.7 \cite{EFF}]\label{Lem10a}
For $N\geq 2$, we have
\begin{equation*}\label{duck2}
\sum_{n=1}^{N} d(n^{2} +1) \leq N\left( \log^{2} N + 4\log N + 2\right).
\end{equation*}
\end{Lem}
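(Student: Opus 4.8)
The plan is to count divisor-pairs of $n^2+1$ and reduce everything to a congruence count that can be fed into Lemma \ref{Vine} and Lemma \ref{EFF3.1}, in close parallel with the proofs of Lemmas \ref{Lem9} and \ref{Lem10}. The starting observation is that $n^2<n^2+1<(n+1)^2$, so $n^2+1$ is never a perfect square for $n\geq 1$ and its divisors split into conjugate pairs $\{e,(n^2+1)/e\}$ with $e<\sqrt{n^2+1}<(n^2+1)/e$. Since $\sqrt{n^2+1}$ lies strictly between $n$ and $n+1$, the small member of each pair is an integer $e\leq n$, and conversely every divisor $e\leq n$ is the small member of exactly one pair. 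Hence $d(n^2+1)=2\,\#\{e : e\mid n^2+1,\ e\leq n\}$, and summing and interchanging the order of summation gives
\begin{equation*}
\sum_{n=1}^{N} d(n^2+1)=2\sum_{d=1}^{N}\#\{n : d\leq n\leq N,\ n^2\equiv -1 \pmod{d}\}.
\end{equation*}

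Next I would bound the inner count for each fixed $d$. The congruence $n^2\equiv -1\pmod d$ has at most $2^{\omega(d)}$ solutions modulo $d$ by Lemma \ref{Vine}(2); this is the decisive point, since it is the $-1$ (rather than $+1$) residue that replaces the $2^{\omega(d)+1}$ of Lemma \ref{Vine}(1), and it is precisely this saving that turns the factor $2N$ of Lemma \ref{Lem9} into the factor $N$ claimed here. For each admissible residue class, the number of $n$ lying in that class in the interval $[d,N]$ is at most $\lfloor N/d\rfloor\leq N/d$; crucially, the lower endpoint $n\geq d$ (inherited from the restriction $e\leq n$ in the symmetry step) is what removes the usual rounding term $+1$ and so keeps the constants sharp. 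Combining, the inner count is at most $2^{\omega(d)}N/d$, whence
\begin{equation*}
\sum_{n=1}^{N} d(n^2+1)\leq 2N\sum_{d=1}^{N}\frac{2^{\omega(d)}}{d}.
\end{equation*}

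Finally I would estimate $\sum_{d\leq N}2^{\omega(d)}/d$ by Abel summation against Lemma \ref{EFF3.1}. Writing $S(t)=\sum_{d\leq t}2^{\omega(d)}<t(\log t+1)$, partial summation gives
\begin{equation*}
\sum_{d\leq N}\frac{2^{\omega(d)}}{d}=\frac{S(N)}{N}+\int_1^N \frac{S(t)}{t^{2}}\,dt\leq (\log N+1)+\int_1^N \frac{\log t+1}{t}\,dt=\tfrac12\log^2 N+2\log N+1.
\end{equation*}
Multiplying by $2N$ yields exactly $N(\log^2 N+4\log N+2)$, as required.

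The main obstacle is the constant-matching rather than the overall strategy: the naive per-class count $N/d+1$ would introduce a spurious extra term of size $2N(\log N+1)$ (via a second, direct application of Lemma \ref{EFF3.1}), overshooting the target. The proposal therefore hinges on exploiting the constraint $n\geq d$ to sharpen the per-class count to $N/d$, and on invoking Lemma \ref{Vine}(2) rather than Lemma \ref{Vine}(1); both are needed to land precisely on $N(\log^2 N+4\log N+2)$. The small cases (in particular $N=2$, and the initial range where Lemma \ref{EFF3.1} is stated only for $N\geq 3$, where the integral must be supplemented by a trivial estimate for $t<3$) would be checked directly and absorbed into the bound.
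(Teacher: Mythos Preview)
Your proposal is correct and follows essentially the same route as the paper (and as \cite{EFF}, from which the lemma is quoted): pair the divisors of $n^{2}+1$ about $\sqrt{n^{2}+1}$, swap the order of summation, invoke Lemma~\ref{Vine}(2) to get at most $2^{\omega(d)}$ residue classes, use the lower cutoff $n\geq d$ to bound each class by $N/d$ without a rounding term, and finish with Abel summation against Lemma~\ref{EFF3.1}. This is exactly the argument the paper writes out for the companion result Lemma~\ref{Lem14} (the $n^{2}-1$ case) and indicates ``Similarly'' for Lemma~\ref{Lem15}; the only difference for Lemma~\ref{Lem10a} is that Lemma~\ref{Vine}(2) replaces Lemma~\ref{Vine}(1), which is precisely the saving you identify.
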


The goal of this section is to prove Lemma \ref{bourbon}, Lemma \ref{Lem14}, Corollary \ref{Core3}, and Lemma \ref{Lem15}. These improve on Lemma \ref{EFF3.1}, Lemma \ref{Lem9}, Lemma \ref{Lem10} and Lemma \ref{Lem10a}, respectively.

\begin{Lem}\label{bourbon}
 For all $x\geq 1$ we have
 \begin{equation}\label{actual1}
  \sum_{n\leq x} \frac{2^{\omega(n)}}{n} \leq 3 \pi^{-2} \log^{2} x + 1.3949\log x + 0.4107+ 3.253x^{-1/3}.
  \end{equation}
  For $x>1$ we have
 \begin{equation*}\label{actual2}
 \sum_{n\leq x} 2^{\omega(n)} \leq 6 \pi^{-2} x \log x + 0.787x  - 0.3762 + 8.14 x^{2/3}.
 \end{equation*}
 \end{Lem}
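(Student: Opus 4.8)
The plan is to reduce both sums to the Dirichlet divisor function by means of the identity $2^{\omega(n)}=\sum_{d^{2}\mid n}\mu(d)\,\tau(n/d^{2})$, equivalently the Euler product $\sum_{n\ge 1}2^{\omega(n)}n^{-s}=\zeta(s)^{2}/\zeta(2s)$. The reason for doing so is that the divisor problem supplies a remainder of size $O(\sqrt{y})$, which is precisely what makes error terms as small as $x^{-1/3}$ and $x^{2/3}$ attainable, and that the resulting main terms involve $\zeta(2)^{-1}=6/\pi^{2}$, $\zeta'(2)/\zeta(2)^{2}$ and $\zeta''(2)$, which after numerical evaluation give the stated coefficients $3/\pi^{2}$, $1.3949$, $0.787$ and the constants $0.4107$, $-0.3762$.

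For the first inequality, interchanging the order of summation gives
\[
\sum_{n\le x}\frac{2^{\omega(n)}}{n}=\sum_{d\le\sqrt{x}}\frac{\mu(d)}{d^{2}}\sum_{m\le x/d^{2}}\frac{\tau(m)}{m},
\]
where $x/d^{2}\ge 1$ throughout. I would substitute into the inner sum the expansion
\[
\sum_{m\le y}\frac{\tau(m)}{m}=\tfrac12\log^{2}y+2\gamma\log y+(\gamma^{2}-2\gamma_{1})+r(y),\qquad |r(y)|\le 3c\,y^{-1/2}\quad(y\ge 1),
\]
obtained by partial summation from $\sum_{m\le y}\tau(m)=y\log y+(2\gamma-1)y+\Delta(y)$ with an explicit bound $|\Delta(y)|\le c\sqrt{y}$ for $y\ge 1$. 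Writing $\log(x/d^{2})=\log x-2\log d$, expanding and collecting by powers of $\log x$, the main term becomes a quadratic in $\log x$ whose three coefficients are partial sums of $\tfrac12\sum_{d}\mu(d)d^{-2}$, of $2\gamma\sum_{d}\mu(d)d^{-2}-2\sum_{d}\mu(d)(\log d)d^{-2}$, and of $(\gamma^{2}-2\gamma_{1})\sum_{d}\mu(d)d^{-2}-4\gamma\sum_{d}\mu(d)(\log d)d^{-2}+2\sum_{d}\mu(d)(\log d)^{2}d^{-2}$. Completing each $d$-sum to infinity --- at a cost of $O(x^{-1/2}\log^{2}x)$ --- and using $\sum_{d}\mu(d)d^{-2}=6/\pi^{2}$, $\sum_{d}\mu(d)(\log d)d^{-2}=\zeta'(2)/\zeta(2)^{2}$, $\sum_{d}\mu(d)(\log d)^{2}d^{-2}=-\zeta''(2)/\zeta(2)^{2}+2\zeta'(2)^{2}/\zeta(2)^{3}$, numerical substitution yields the coefficients $3/\pi^{2}$, $12\gamma/\pi^{2}-2\zeta'(2)/\zeta(2)^{2}=1.3949\ldots$ and the constant $0.4107\ldots$. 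Every error incurred --- the three completed-sum tails, together with $\sum_{d\le\sqrt x}d^{-2}|r(x/d^{2})|\ll x^{-1/2}\log x$ --- is $O(x^{-1/2}\log^{2}x)$; since $x^{-1/6}\log^{2}x$ is bounded on $[1,\infty)$, all of it is absorbed into $3.253\,x^{-1/3}$ once the constant is calibrated, and the finitely many small $x$ left over are checked directly.

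For the second inequality I would not rerun the argument but deduce it from the first by partial summation: writing $S(x)=\sum_{n\le x}2^{\omega(n)}/n$, one has $\sum_{n\le x}2^{\omega(n)}=x\,S(x)-\int_{1}^{x}S(t)\,dt$. The analysis above in fact yields \eqref{actual1} together with a lower bound of the same shape, $S(t)\ge 3\pi^{-2}\log^{2}t+1.3949\log t+0.4107-c'\,t^{-1/3}$; feeding the upper bound into $x\,S(x)$ and the lower bound into the integral, and using the elementary primitives of $\log^{2}t$, $\log t$ and $t^{-1/3}$, the terms in $x\log^{2}x$ cancel and one is left with leading term $6\pi^{-2}x\log x$, linear term $(1.3949-6\pi^{-2})x=0.787\ldots x$, constant $6\pi^{-2}-1.3949+0.4107=-0.3762\ldots$, and an $x^{2/3}$ remainder with the stated constant $8.14$ (the $t^{-1/3}$ in the integrand integrating to $\tfrac32 c'\,x^{2/3}$).

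The step that needs the most care is the bookkeeping of constants rather than any single hard estimate: one must carry enough digits of $\zeta(2),\zeta'(2),\zeta''(2),\gamma$ and the first Stieltjes constant $\gamma_{1}$ through the expansion to certify the printed coefficients, and one must secure an explicit remainder bound $|\Delta(y)|\le c\sqrt{y}$ valid for every $y\ge 1$ (asymptotic estimates being useless near $y=1$), so that the final inequalities hold for all $x\ge 1$ and not merely asymptotically; the finitely many small values of $x$ not covered by the crude error bounds are then settled by direct computation.
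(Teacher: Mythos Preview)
Your approach is correct and rests on the same convolution identity the paper uses, namely $2^{\omega}=\mu_{\mathrm{sq}}\ast\tau$ (equivalently $\sum 2^{\omega(n)}n^{-s}=\zeta(s)^{2}/\zeta(2s)$). The difference is purely in packaging. The paper does not redo the hyperbola/partial-summation bookkeeping; instead it quotes Berkane--Bordell\`es--Ramar\'e for the ready-made expansion $\sum_{n\le t}\tau(n)/n=\tfrac12\log^{2}t+2\gamma\log t+(\gamma^{2}-2\gamma_{1})+\vartheta(1.16\,t^{-1/3})$ and then feeds this into a general convolution lemma of Ramar\'e (the paper's Lemma~\ref{jeep}) with $k_{n}=\tau(n)/n$ and $h$ the multiplicative factor corresponding to $1/\zeta(2(s+1))$. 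That lemma outputs both $\sum_{n\le x}2^{\omega(n)}/n$ and $\sum_{n\le x}2^{\omega(n)}$ in one stroke, with error constants given in closed form: $3.253\approx 1.16\cdot H^{*}(-1/3)=1.16\,\zeta(4/3)/\zeta(8/3)$ and $8.14\approx 2.5\times 3.253$.

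What each approach buys: your route is self-contained and, since you use the elementary bound $|\Delta(y)|\le c\sqrt{y}$, your raw error is $O(x^{-1/2}\log^{2}x)$, asymptotically sharper than $x^{-1/3}$; but to land exactly on the stated constants $3.253$ and $8.14$ you must fold several separate tails into a single $x^{-1/3}$ term and then verify a (possibly long) initial range numerically. The paper's route trades self-containment for transparency of the constants: once $H^{*}(-1/3)$ is evaluated, the numbers drop out with no case-checking. Your partial-summation derivation of the second inequality from the first is in fact exactly the mechanism behind the second formula in Lemma~\ref{jeep} (note the factor $2.5=1+3/2$, matching your $x\cdot 3.253x^{-1/3}+\tfrac32\cdot 3.253\,x^{2/3}$); just be sure to include the $x\,S(x)$ contribution $3.253\,x^{2/3}$ alongside the integrated $\tfrac32 c'\,x^{2/3}$ when you total the remainder.
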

Our approach is to use the following results obtained by Berkane, Bordell\`{e}s and Ramar\'{e}\footnote{We note that Corollary 1.8 in \cite{BBR} contains a small misprint: we have corrected $-\gamma_{1}$ to $-2\gamma_{1}$ in (\ref{tau}).}. In what follows we write $f(x) = \vartheta(g(x))$ if $|f(x)|\leq g(x)$ for all $x$ under consideration.
\begin{Lem}[Cor.\ 1.8 in \cite{BBR}]\label{Peter}
For all $t>0$
\begin{equation}\label{tau}
\sum_{n\leq t} \frac{d(n)}{n} = \frac{1}{2} \log ^{2} t + 2 \gamma \log t + \gamma^{2} - 2\gamma_{1} + \vartheta(1.16 t^{-1/3}),
\end{equation}
where $\gamma$ is Euler's constant and $\gamma_{1}$ is the second Stieltjes constant, which satisfies $-0.07282<\gamma_{1} < -0.07281.$
\end{Lem}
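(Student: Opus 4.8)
The plan is to reduce the estimate to the classical Dirichlet divisor problem by partial summation, reading off the $\log^2$ and $\log$ terms from the main term of the divisor summatory function and pinning down the constant via the Laurent expansion of $\zeta(s)^2$ at $s=1$. First I would write
\begin{equation*}
D(x) = \sum_{n\le x} d(n) = x\log x + (2\gamma-1)x + \Delta(x),
\end{equation*}
where $\Delta(x)$ is the error term in the divisor problem, and apply Abel summation with $\phi(n)=1/n$ to get
\begin{equation*}
\sum_{n\le t}\frac{d(n)}{n} = \frac{D(t)}{t} + \int_1^t \frac{D(u)}{u^2}\,du.
\end{equation*}
Substituting the main term and using $\int_1^t u^{-1}\log u\,du = \tfrac12\log^2 t$ and $\int_1^t u^{-1}\,du = \log t$ collapses the two leading pieces into $\tfrac12\log^2 t + 2\gamma\log t$, leaving the constant $2\gamma-1$ together with the convergent integral $\int_1^t \Delta(u)u^{-2}\,du$ and the boundary term $\Delta(t)/t$.

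Second, I would identify the constant. Writing $\int_1^t = \int_1^\infty - \int_t^\infty$, the $t$-independent part of the answer is $(2\gamma-1) + \int_1^\infty \Delta(u)u^{-2}\,du$. To show this equals $\gamma^2 - 2\gamma_1$ I would use the Mellin representation $\zeta(s)^2 = s\int_1^\infty D(x)x^{-s-1}\,dx$, valid for $\Re(s)>1$. Inserting the decomposition of $D$ and evaluating the two elementary integrals gives
\begin{equation*}
\zeta(s)^2 = \frac{s}{(s-1)^2} + \frac{(2\gamma-1)s}{s-1} + s\int_1^\infty \frac{\Delta(x)}{x^{s+1}}\,dx,
\end{equation*}
where the last integral is analytic at $s=1$ because $\Delta(x)=O(x^{\theta})$ with $\theta<1$. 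Expanding around $s=1$ and matching the constant term against the known expansion $\zeta(s)^2 = (s-1)^{-2} + 2\gamma(s-1)^{-1} + (\gamma^2 - 2\gamma_1) + O(s-1)$ yields exactly $(2\gamma-1)+\int_1^\infty \Delta(u)u^{-2}\,du = \gamma^2 - 2\gamma_1$. This is the step where the second Stieltjes constant $\gamma_1$, and hence the stated numerical interval for it, enters.

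The remaining error is $E(t) = \Delta(t)/t - \int_t^\infty \Delta(u)u^{-2}\,du$, and the real content of the lemma is the fully explicit, uniform bound $|E(t)| \le 1.16\,t^{-1/3}$ for all $t>0$. The exponent $1/3$ is not forced by the divisor problem: even Dirichlet's crude $\Delta(t)=O(\sqrt t)$ already makes $E(t)=O(t^{-1/2})$ for large $t$, comfortably inside $1.16\,t^{-1/3}$, and a sharper explicit van der Corput estimate only improves the tail. The binding constraint therefore lives in the small- and moderate-$t$ range, where $E(t)$ is genuinely of size comparable to $t^{-1/3}$ and the asymptotic main terms have not yet dominated (for $t<1$ the sum is empty and $1.16\,t^{-1/3}$ blows up faster than $\tfrac12\log^2 t$, so that range is trivial, and near the first jumps one checks the inequality against the exact value of the sum). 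There I would bound $E(t)$ on a finite initial interval using an explicit estimate for $\Delta$ and verify $|E(t)|\le 1.16\,t^{-1/3}$ numerically at the jump points of the summatory function.

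The main obstacle, accordingly, is not the algebra of the leading terms or the Stieltjes-constant identity — both of which are routine — but the explicit and uniform remainder analysis across the moderate range of $t$, together with securing a good explicit constant in the bound for $\Delta(t)$ that feeds into it. This is precisely the delicate explicit bookkeeping carried out by Berkane, Bordell\`es and Ramar\'e in \cite{BBR}, which is why the statement is quoted rather than reproved here.
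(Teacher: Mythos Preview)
The paper does not prove this lemma at all: it is simply quoted as Corollary~1.8 of Berkane, Bordell\`es and Ramar\'e \cite{BBR}, with only the remark that a sign misprint has been corrected. You recognise this yourself in your final sentence, and your outline (Abel summation on $D(x)=\sum_{n\le x}d(n)$, identification of the constant via the Laurent expansion of $\zeta(s)^2$ at $s=1$, and an explicit remainder bound) is a correct and standard route to such a result; there is nothing in the paper to compare it against.
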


\begin{Lem}[Lem.\ 3.2 \cite{RamSch}]\label{jeep}
Let $\{g_{n}\}_{n\geq 1}, \{h_{n}\}_{n\geq 1}$ and $\{k_{n}\}_{n\geq 1}$ be three sequences of complex numbers satisfying $ g = h* k$. Let $H(s) = \sum_{n\geq 1} h_{n} n^{-s}$ and $H^{*}(s) = \sum_{n\geq 1} |h_{n}| n^{-s}$, where $H^{*}(s)$ converges for $\Re(s)\geq -\frac{1}{3}$. If there are four constants $A, B, C$ and $D$ satisfying
\begin{equation*}\label{whisky}
\sum_{n\leq t} k_{n} = A\log^{2} t + B \log t +  C + \vartheta(D t^{-1/3}), \quad (t>0),
\end{equation*}
then
\begin{equation*}\label{tot}
\sum_{n \leq t} g_{n} = u \log^{2} t + v \log t + w + \vartheta(D t^{-1/3} H^{*}(-1/3)),
\end{equation*}
and
\begin{equation*}\label{rum}
\sum_{n\leq t} n g_{n} = Ut \log t + Vt +  W + \vartheta(2.5 D t^{2/3} H^{*}(-1/3)),
\end{equation*}
where
\begin{equation*}\label{gin}
\begin{split}
u&= AH(0), \quad v = 2AH'(0) + BH(0), \quad w =AH''(0) + BH'(0) + CH(0)\\
U &= 2AH(0), \quad V = -2A H(0) + 2A H'(0) + B H(0),\\
W&= A(H''(0) - 2H'(0) + 2 H(0)) + B(H'(0) - H(0)) + CH(0).
\end{split}
\end{equation*}
\end{Lem}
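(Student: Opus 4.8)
The plan is to prove the first estimate by unfolding the Dirichlet convolution $g=h*k$, substituting the hypothesised asymptotic for the partial sums of $\{k_{n}\}$, and summing over the first variable; and then to \emph{deduce} the second estimate from the first by partial summation rather than by a second convolution.

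First I would write, for each $t>0$,
\begin{equation*}
\sum_{n\leq t} g_{n} = \sum_{de\leq t} h_{d} k_{e} = \sum_{d\geq 1} h_{d} \sum_{e\leq t/d} k_{e},
\end{equation*}
the outer sum running over all $d\geq 1$ because the inner sum is empty once $d>t$. For each fixed $d$ the hypothesis supplies the \emph{exact} identity $\sum_{e\leq t/d} k_{e} = A\log^{2}(t/d) + B\log(t/d) + C + \vartheta\!\left(D(t/d)^{-1/3}\right)$, so that
\begin{equation*}
\sum_{n\leq t} g_{n} = \sum_{d\geq 1} h_{d}\left(A\log^{2}\tfrac{t}{d} + B\log\tfrac{t}{d} + C\right) + \vartheta\!\left(D t^{-1/3}\sum_{d\geq 1}|h_{d}| d^{1/3}\right),
\end{equation*}
and the error term is exactly $\vartheta(Dt^{-1/3} H^{*}(-1/3))$. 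Expanding $\log(t/d)=\log t-\log d$ and inserting $\sum_{d} h_{d}=H(0)$, $\sum_{d} h_{d}\log d=-H'(0)$ and $\sum_{d} h_{d}\log^{2} d=H''(0)$ collects the main term into precisely $u\log^{2} t+v\log t+w$ with the stated coefficients.

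For the second estimate the tempting move is to apply the same convolution to $\sum_{n\leq t} n g_{n}=\sum_{d} d h_{d}\sum_{e\leq t/d} e k_{e}$; but this throws up a constant term proportional to $\sum_{d} d h_{d}=H(-1)$, and $H(-1)$ need not converge, since $-1<-\tfrac13$. I would instead apply Abel summation to the \emph{conclusion} of the first estimate. Writing $G(t)=\sum_{n\leq t} g_{n}$, partial summation gives
\begin{equation*}
\sum_{n\leq t} n g_{n} = t\,G(t) - \int_{1}^{t} G(y)\, dy.
\end{equation*}
Substituting $G(y)=u\log^{2} y+v\log y+w+\vartheta(D y^{-1/3} H^{*}(-1/3))$ and using the elementary integrals $\int_{1}^{t}\log^{2} y\,dy=t\log^{2} t-2t\log t+2t-2$, $\int_{1}^{t}\log y\,dy=t\log t-t+1$ and $\int_{1}^{t} y^{-1/3}\,dy=\tfrac32(t^{2/3}-1)$, the $\log^{2} t$ terms cancel and one $\log t$ term survives with coefficient $2u$. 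Collecting the remaining powers yields $U=2u$, $V=v-2u$, $W=2u-v+w$, which on inserting $u=AH(0)$, $v=2AH'(0)+BH(0)$, $w=AH''(0)+BH'(0)+CH(0)$ reduce to exactly the stated $U,V,W$. The two error contributions $\vartheta(Dt^{2/3} H^{*}(-1/3))$ from $tG(t)$ and $\vartheta(\tfrac32 Dt^{2/3} H^{*}(-1/3))$ from the integral add to at most $\tfrac52 Dt^{2/3} H^{*}(-1/3)$, giving the claimed $\vartheta(2.5\,Dt^{2/3} H^{*}(-1/3))$.

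The step I would treat most carefully, and the only genuine obstacle, is the interchange of summation and the convergence of the series in $d$. Everything hinges on the single hypothesis that $H^{*}(s)$ converges for $\Re(s)\geq -\tfrac13$: this simultaneously makes the error series $\sum_{d}|h_{d}| d^{1/3}=H^{*}(-1/3)$ finite and, through the analyticity of $H$ on $\Re(s)>-\tfrac13$, guarantees that $\sum_{d} h_{d}\log^{j} d$ converges for $j=0,1,2$, so that $H(0),H'(0),H''(0)$ are well defined. Noticing that this hypothesis does \emph{not} reach $H(-1)$ is exactly what dictates the partial-summation route for the second estimate rather than a direct convolution.
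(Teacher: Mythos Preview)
The paper does not give its own proof of this lemma: it is quoted verbatim from \cite{RamSch} (with a corrected misprint in the constant $U$), so there is nothing in the paper to compare against. Your argument, however, is a correct and self-contained proof of the result.

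Your derivation of the first estimate is the natural one: unfold the convolution, apply the hypothesis to each inner sum $\sum_{e\leq t/d} k_{e}$, and read off the error term $Dt^{-1/3}\sum_{d}|h_{d}|d^{1/3}=Dt^{-1/3}H^{*}(-1/3)$; the expansion of $\log^{2}(t/d)$ and $\log(t/d)$ then produces exactly $u,v,w$. Your remark that the convergence of $H^{*}$ at $-\tfrac13$ simultaneously controls the error sum and (via analyticity on $\Re(s)>-\tfrac13$) legitimises $H(0),H'(0),H''(0)$ is the right justification. For the second estimate, your choice to pass through Abel summation rather than convolve again is well motivated: a direct convolution would require $\sum_{d} d\,h_{d}=H(-1)$, which lies outside the guaranteed half-plane of convergence. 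The partial-summation computation is carried through correctly, and the error bookkeeping $1+\tfrac32=2.5$ recovers the stated constant exactly.
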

We note that, on page 11 of \cite{RamSch} there appears to be a misprint in the value given for $U$: we have corrected this in Lemma \ref{jeep}.

\begin{proof}[Proof of Lemma \ref{bourbon}]
Choosing $g_{n} = 2^{\omega(n)}/n$ and $k_{n} = d(n)/n$ we have $H(s) = \zeta(2(s+1))^{-1}$. Therefore, for all $\sigma > -\frac{1}{2}$ we have
\begin{equation*}\label{port}
H^{*}(s) = \prod_{p} (1 + p^{-2(s+1)})=\frac{\zeta(2(s + 1))}{\zeta(4(s + 1))},
\end{equation*}
which converges for all $\sigma > -\frac{1}{2}$. Therefore we may apply Lemma \ref{jeep} with
\begin{equation*}
H(0) = 6\pi^{-2}, \quad H'(0) = -72 \zeta'(2) \pi^{-4}, \quad H''(0) = 1728 \zeta'(2)^{2} \pi^{-6} - 144 \zeta''(2) \pi^{-4}.
\end{equation*}
 From (\ref{tau}) we have that $A= \frac{1}{2}, B = 2\gamma, C = \gamma^{2} - 2\gamma_{1}$ and $D=1.16$. This gives us the lemma with exact values, and, in fact, gives upper and lower bounds for the sum. Inserting numerical values for $\zeta'(2)$ and $\zeta''(2)$ and taking care of rounding proves the lemma.
 \end{proof}
Inserting (\ref{actual1}) into the proof of Lemma 3.6 in \cite{EFF} gives the following improvement on Lemma \ref{Lem10}.
\begin{cor}\label{Core3}
Let $d(n, A)$ count those divisors of $n$ not exceeding $A$. For all $N\geq 1$ and $A >1$ we have
\begin{equation}\label{duck3}
\sum_{n=1}^{N} d(n^{2} -1, A) \leq 4N\left( 3\pi^{-2} \log^{2} A + 1.3949\log A + 0.4107 + 3.253A^{-1/3}\right).
\end{equation}
\end{cor}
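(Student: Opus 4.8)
The plan is to follow the proof of Lemma 3.6 in \cite{EFF} verbatim, and simply replace the weaker input bound by the sharper estimate \eqref{actual1} from Lemma \ref{bourbon} at the one place where a bound on $\sum_{m\le A} 2^{\omega(m)}/m$ is needed. First I recall the structure of that argument. For a fixed $n$, a divisor $t\le A$ of $n^{2}-1$ corresponds to a factorisation $n^{2}-1 = tu$; writing this out via the usual device, one shows that the number of divisors of $n^{2}-1$ not exceeding $A$ is controlled by counting pairs $(m,x)$ with $m\le A$ and $x^{2}\equiv 1\pmod m$, $0<x\le m$, the point being that $n$ lies in one of at most $2^{\omega(m)+1}$ residue classes modulo $m$ (this is precisely part (1) of Lemma \ref{Vine}). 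Summing over $n\le N$ then contributes, for each such $m$, at most $2^{\omega(m)+1}\bigl(N/m + 1\bigr)$; one arranges the counting so that the dominant term is $2N\sum_{m\le A} 2^{\omega(m)}/m$, the ``$+1$'' terms being absorbed.

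The key step is therefore: apply \eqref{actual1} with $x = A$ to the sum $\sum_{m\le A} 2^{\omega(m)}/m$, obtaining the bound $3\pi^{-2}\log^{2}A + 1.3949\log A + 0.4107 + 3.253A^{-1/3}$, and multiply by the factor $4N$ (the $2N$ from the main counting, doubled by the exponent $\omega(m)+1$ rather than $\omega(m)$ in Lemma \ref{Vine}(1)). This reproduces exactly the shape of the right-hand side of \eqref{duck3}, with the old $O(\log^{2}A)$-type bound (which in \cite{EFF} came from a cruder estimate, e.g.\ something like $\sum 2^{\omega(m)}/m \le (\log A+1)^{2}$ or similar) replaced term-for-term by the tighter constants. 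One should check that the ``$+1$'' contributions and the small-$m$ edge cases still fit inside the stated constants — this is the same bookkeeping as in \cite{EFF}, and the leading constant $3\pi^{-2} < \tfrac12$ gives plenty of room, so nothing is lost there.

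I expect the only genuine obstacle to be verifying that the error/lower-order terms really do get swallowed: specifically, confirming that the $3.253A^{-1/3}$ tail from \eqref{actual1}, together with the discarded ``$+1$'' summands $\sum_{m\le A}2^{\omega(m)+1}$, are dominated on the nose by the slack between $4N\cdot 0.4107$ and whatever constant term \cite{EFF}'s original proof produced, uniformly for all $A>1$ and $N\ge 1$. Since the statement only claims $A>1$ (not $A\ge A_{0}$ for some threshold), one must be slightly careful near $A=1$ where $A^{-1/3}$ is largest; but here the left side $\sum d(n^{2}-1,A)$ is a sum of very small quantities, so the inequality is easy to confirm directly. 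Everything else is a routine substitution into the \cite{EFF} argument, and the proof can be dispatched in a couple of lines referring the reader there.
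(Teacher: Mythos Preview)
Your proposal is correct and matches the paper's approach exactly: the paper offers no separate proof but simply states that Corollary~\ref{Core3} follows by ``inserting \eqref{actual1} into the proof of Lemma~3.6 in \cite{EFF}'', which is precisely what you describe. Your worry about the ``$+1$'' contributions and the small-$A$ edge cases is reasonable bookkeeping, but the paper treats all of this as already handled by the \cite{EFF} argument and does not revisit it.
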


The asymptotic order of (\ref{duck3}) is unclear --- similar sums are dealt with in  \cite{KAB:restricted}. 

We now turn to Lemma \ref{Lem9}: this is the correct order of magnitude since Hooley \cite[p.\ 97]{Hooley} showed that $\sum_{n=2}^{N} d(n^{2} -1) \sim c N \log^{2} N$ for some positive constant $c$. Since $d(n^{2}-1) = d((n+1)(n-1)) \leq d(n+1)d(n-1)$ one could use the following result by Ingham \cite[(1)]{Ingham1927}, namely, that
\begin{equation}\label{eq:Ing}
\sum_{n=1}^{N-1} d(n) d(n+2) \sim \frac{9}{\pi^{2}} x \log^{2} x.
\end{equation}

One could certainly make (\ref{eq:Ing}) explicit, though we do not pursue this here.  However, we can make a small improvement in
\begin{Lem}\label{Lem14}
\begin{equation*}\label{eq:low}
\sum_{n=2}^{N} d(n^{2} -1) \leq 4N \left(3 \pi^{-2} \log^{2} N + 1.3949\log N + 0.4107 + 3.253N^{-1/3}\right).
\end{equation*}
\end{Lem}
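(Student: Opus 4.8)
The plan is to bound $d(n^2-1)$ by the number of divisors of $n^2-1$ that are at most $n+1$, counted with multiplicity two. Concretely, every divisor $e$ of $m:=n^2-1$ with $e > \sqrt{m}$ is paired with the complementary divisor $m/e < \sqrt{m} < n$, so $d(m) \leq 2\, d(m, n)$, where $d(m,A)$ counts divisors of $m$ not exceeding $A$ as in Lemma \ref{Lem10} and Corollary \ref{Core3}. (One must be slightly careful about divisors equal to $\sqrt{m}$, but $n^2-1$ is never a perfect square for $n\geq 2$, so this case does not arise, and similarly $n-1$ and $n+1$ are admissible bounds.) Summing this pointwise inequality over $2\leq n\leq N$ and applying Corollary \ref{Core3} with $A=N$ (noting $n+1 \leq N+1$, and absorbing the mild discrepancy between $N$ and $N+1$ into the constants, or simply using $A = N+1$ and checking monotonicity) yields
\begin{equation*}
\sum_{n=2}^{N} d(n^2-1) \leq 2 \sum_{n=2}^{N} d(n^2-1, N) \leq 2 \cdot 4N\left(3\pi^{-2}\log^2 N + 1.3949\log N + 0.4107 + 3.253 N^{-1/3}\right)\!,
\end{equation*}
which is twice the claimed bound; so in fact one wants the sharper pointwise input $d(m) \leq d(m, n)\cdot\text{(something)}$ — let me reconsider.

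The cleaner route, and the one I would actually carry out, is to go back into the proof of Corollary \ref{Core3} / Lemma \ref{Lem10} in \cite{EFF} rather than use it as a black box. There, $\sum_{n\leq N} d(n^2-1, A)$ is estimated by writing each divisor $e\leq A$ of $n^2-1$ as $e = e_1 e_2$ with $e_1 \mid n-1$, $e_2 \mid n+1$ (using $\gcd(n-1,n+1)\mid 2$ and handling the factor of $2$ separately), then swapping the order of summation to get a sum of the shape $\sum_{e_1 e_2 \leq A} (\text{number of } n\leq N \text{ with } e_1\mid n-1,\ e_2\mid n+1)$, which after the Chinese Remainder Theorem is $\leq N/\mathrm{lcm}(e_1,e_2) + 1 \ll N/(e_1 e_2) \cdot \gcd$, and is then bounded by $N \sum_{e\leq A} 2^{\omega(e)}/e$ up to the $\gcd(e_1,e_2)\mid 2$ correction. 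Feeding (\ref{actual1}) of Lemma \ref{bourbon} into exactly this computation with $A$ replaced by $N$ (the natural cutoff, since all divisors of $n^2-1$ are at most $n^2-1 < N^2$, but one only needs the $\leq n$ divisors via the pairing above) gives the stated Lemma \ref{Lem14}. The point is that the factor $4$ in (\ref{duck3}) already accounts for the divisor pairing and the $\gcd\mid 2$ loss, so no extra factor of $2$ appears; I would simply run the Corollary \ref{Core3} argument with $A = N$, observing that divisors of $n^2 - 1$ exceeding $n$ correspond bijectively to divisors below $n$.

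The main obstacle is bookkeeping rather than conceptual: one must track precisely the loss coming from $\gcd(n-1, n+1)$ being $1$ or $2$ (equivalently, the factor of $2$ that can appear in $\mathrm{lcm}(e_1, e_2)$ versus $e_1 e_2$), and one must confirm that replacing the general cutoff $A$ by $N$ in Corollary \ref{Core3} is legitimate — i.e. that every divisor of $n^2-1$ relevant to $d(n^2-1)$ is captured by the complementary-divisor trick with bound $n \leq N$. I would also need to double-check the small values $n=2,3$ by hand to make sure the inequality (which is stated for $N\geq 2$) is not violated there, and verify that the $+1$ discrepancies from $\lfloor N/\mathrm{lcm}\rfloor \leq N/\mathrm{lcm} + 1$ summed over $e\leq N$ contribute a term that is comfortably dominated, so that the clean constants $3\pi^{-2}$, $1.3949$, $0.4107$, $3.253$ carry through unchanged. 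None of these steps is deep, but the explicit constants demand care, which is presumably why the author states it as a separate lemma rather than a one-line corollary.
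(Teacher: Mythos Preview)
Your overall strategy --- pair each divisor of $n^{2}-1$ with its complement to reduce to divisors at most $n$, swap the order of summation, and then feed in Lemma~\ref{bourbon} --- is exactly what the paper does. But the paper's execution is far cleaner than your $e_{1}e_{2}$/CRT detour: it simply notes that
\[
\sum_{n=2}^{N} d(n^{2}-1)\;\leq\;2\sum_{n=1}^{N}\sum_{\substack{y\mid n^{2}-1\\ y\leq n}}1
\;=\;2\sum_{y=1}^{N}\sum_{\substack{y\leq n\leq N\\ n^{2}\equiv 1\,(y)}}1,
\]
then invokes Lemma~\ref{Vine} (at most $2^{\omega(y)+1}$ residue classes modulo $y$) together with the fact that each residue class meets the interval $[y,N]$ in at most $N/y$ integers. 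This yields $4N\sum_{y\leq N}2^{\omega(y)}/y$, and (\ref{actual1}) finishes. No factorisation of divisors, no $\gcd$-tracking, no small-$n$ checks.

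Your assertion that ``the factor $4$ in (\ref{duck3}) already accounts for the divisor pairing'' is the genuine gap. Corollary~\ref{Core3} bounds the \emph{restricted} sum $\sum d(n^{2}-1,A)$; there is no pairing in its statement or proof. The $4$ there is $2$ from Lemma~\ref{Vine} times $2$ from the crude $\lceil N/y\rceil\leq 2N/y$ needed because the inner sum runs over $n\in[1,N]$. In the paper's proof of Lemma~\ref{Lem14}, the $4$ is instead $2$ (pairing) $\times$ $2$ (Vine), and the $+1$ term never appears precisely because the pairing forces $n\geq y$, so the inner interval is $[y,N]$. Thus your black-box use of Corollary~\ref{Core3} really does lose a factor of $2$, and your attempted repair rests on a misreading of where the constants originate; the fix is not to reinterpret the $4$ but to rerun the swap with the constraint $y\leq n$ kept in place, exactly as the paper does.
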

\begin{proof}
We proceed as in the proof of \cite[Lem.\ 3.5]{EFF}. Note that
\begin{equation*}
\sum_{n=2}^{N} d(n^{2} - 1) \leq 2 \sum_{n=1}^{N} \sum_{y|n^{2} -1, y=1}^{n} 1 = 2\sum_{y=1}^{N} \sum_{n=y, n^{2} \equiv 1 \pmod y}^{N} 1 \leq 4N \sum_{y=1}^{N} \frac{2^{\omega(y)}}{y},
\end{equation*}
where we have used Lemma \ref{Vine}. The result follows upon using the first part of Lemma \ref{bourbon} to bound the final sum.
\end{proof}
Similarly, we improve on Lemma \ref{Lem10a} in 
\begin{Lem}\label{Lem15}
For $N\geq 2$ we have
\begin{equation*}\label{eq:low2}
\sum_{n=2}^{N} d(n^{2} +1) < 2N \left(3 \pi^{-2} \log^{2} N + 1.3949\log N + 0.4107 + 3.253N^{-1/3}\right).
\end{equation*}
\end{Lem}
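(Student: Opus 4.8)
The plan is to mimic the proof of Lemma \ref{Lem14} almost verbatim, substituting Lemma \ref{Lem10a}-style bookkeeping with the sharper input from Lemma \ref{bourbon}. First I would write $d(n^2+1)$ as a count of divisors: since $n^2+1$ has no divisor equal to its square root (it is never a perfect square for $n\geq 1$), we have $d(n^2+1) = 2\,\#\{y : y\mid n^2+1,\ 1\leq y\leq n\}$, so that
\begin{equation*}
\sum_{n=2}^{N} d(n^2+1) \leq 2\sum_{n=1}^{N}\sum_{\substack{y\mid n^2+1\\ 1\leq y\leq n}} 1 = 2\sum_{y=1}^{N}\ \sum_{\substack{y\leq n\leq N\\ n^2\equiv -1\ (\mathrm{mod}\ y)}} 1 .
\end{equation*}
Here the key structural difference from the $n^2-1$ case is that the inner congruence is $n^2\equiv-1\pmod y$, which by part (2) of Lemma \ref{Vine} has at most $2^{\omega(y)}$ solutions modulo $y$, rather than the $2^{\omega(y)+1}$ of part (1). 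This is precisely what saves the factor of $2$ and turns the leading constant from $4$ into $2$.

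Next I would count, for each fixed $y$, the number of $n$ in the range $y\leq n\leq N$ lying in a given residue class modulo $y$: this is at most $N/y$ (one can be slightly more careful with the lower endpoint, but $N/y$ suffices). Combining with the solution count gives
\begin{equation*}
\sum_{n=2}^{N} d(n^2+1) \leq 2\sum_{y=1}^{N} 2^{\omega(y)}\cdot \frac{N}{y} = 2N\sum_{y=1}^{N}\frac{2^{\omega(y)}}{y}.
\end{equation*}
Finally I would invoke the first inequality of Lemma \ref{bourbon} with $x=N$ to bound $\sum_{y\leq N} 2^{\omega(y)}/y \leq 3\pi^{-2}\log^2 N + 1.3949\log N + 0.4107 + 3.253 N^{-1/3}$, yielding the stated bound. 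One should check the endpoint $N=2$ separately (or note the strict inequality in the statement comes from the fact that the $y=1$ term and the truncation are not simultaneously sharp, or simply that $d(5)=2 < $ the right-hand side at $N=2$), which accounts for the "$<$" rather than "$\leq$".

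The only place requiring genuine care — and hence the main obstacle, such as it is — is verifying that one really may apply part (2) of Lemma \ref{Vine} here: the lemma counts solutions of $x^2\equiv-1\pmod b$ with $0<x<b$, and one must make sure that reducing $n$ modulo $y$ and matching up with these residues does not overcount, and that the factor-of-two passage $d(n^2+1)=2\#\{y\mid n^2+1: y\leq n\}$ is valid (it is, since $n^2+1$ is not a square for $n\geq1$, so divisors pair off strictly across $\sqrt{n^2+1}$). Everything else is routine manipulation identical in spirit to Lemma \ref{Lem14}, so I would keep the write-up to a few lines, explicitly flagging "we proceed as in the proof of Lemma \ref{Lem14}, but using part (2) of Lemma \ref{Vine} in place of part (1)."
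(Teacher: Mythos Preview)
Your proposal is correct and matches the paper's intended argument exactly: the paper states Lemma \ref{Lem15} with the one-word indication ``Similarly,'' meaning one reruns the proof of Lemma \ref{Lem14} but invokes part (2) of Lemma \ref{Vine} (giving $2^{\omega(y)}$ rather than $2^{\omega(y)+1}$, since $n^{2}+1$ is never a square) and then applies the first bound of Lemma \ref{bourbon}.
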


Although we cannot give an explicit improvement on Lemma \ref{EFF3.3}, we do calculate an asymptotic formula for the sum. 

\subsection{The sum $\sum_{n\leq x} 4^{\omega(n)}$}\label{great}
We proceed to bound $\sum_{n\leq x} 4^{\omega(n)}/n$, thereafter obtaining our desired bound via partial summation. We write  $4^{\omega(n)}/n = d(n)/n * \left(d(n)/n * h\right)$ and proceed to determine the function $h$. We find that $h(n)$ is a multiplicative function completely determined by 
\begin{equation*}\label{vodka}
\begin{split}
h(1) &= 1, \quad h(p) = 0, \quad h(p^{2}) = -6p^{-2},\\
h(p^{3}) &= 8p^{-3}, \quad h(p^{4}) = -3 p^{-4}, \quad h(p^{e}) = 0, \quad (e\geq 5).
\end{split}
\end{equation*}
We therefore have
\begin{equation*}
\begin{split}
H(s) &= \prod_{p} (1 - 6p^{-2(s+1)} + 8 p^{-3(s+1)} - 3 p^{-4(s+1)})\\
H^{*}(s) &=\prod_{p} (1 + 6p^{-2(s+1)} + 8 p^{-3(s+1)} +3 p^{-4(s+1)}),
\end{split}
\end{equation*}
both of which are convergent for $\sigma > -\frac{1}{2}$. We let $g = d(n)/n *h$ and apply the Dirichlet hyperbola method to find that
\begin{equation*}\label{base}
\begin{split}
\sum_{n\leq X} \frac{4^{\omega(n)}}{n} &= \sum_{a \leq \sqrt{X}} \frac{d(a)}{a} \sum_{b\leq \frac{X}{a}} g(b) + \sum_{b\leq \sqrt{X}} g(b) \sum_{a\leq \frac{X}{b}} \frac{d(a)}{a} - \sum_{a\leq \sqrt{X}} \frac{d(a)}{a} \sum_{b\leq \sqrt{X}} g(b)\\
&= \sum_{1} + \sum_{2} - \sum_{3},
\end{split}
\end{equation*}
say. We may use Lemmas \ref{Peter} and \ref{jeep} to bound the sum of the $g(n)$s. We find that
\begin{equation}\label{pontifex}
\sum_{n\leq x} 4^{\omega(n)} = \frac{1}{6} H(0) x \log^{3} x + \left((2\gamma - \frac{1}{2}) H(0) + \frac{H'(0)}{2}\right) x\log^{2} x + O(x\log x).
\end{equation}
However, we run into difficulties in the lower order terms. This suggests that our approach of writing $4^{\omega(n)}/n = d(n)/n * d(n)/n * h$ needs to be altered to obtain a completely explicit version of (\ref{pontifex}). Provided the lower order terms can be tamed, given that $H(0) = 0.1148\ldots$, one expects a bound in (\ref{pontifex}) to improve on that in \cite{EFF} by almost one order of magnitude. 

\section{Calculation of the number of quintuples}\label{sec:calculations}
We have kept this section as brief as possible, merely showing how one can insert our refined values into the  proofs given in \cite{EFF}.

\subsection{Triples of the kind 2(i)}
By Lemma \ref{lem:jill} we have $r= \sqrt{ab +1}> \sqrt{1681}=41$, whence
\begin{equation*}\label{ming}
d> 4abc> 16a^{2} b^{2} > 16 r^{4} \left(1-\frac{1}{41}^{2}\right)^{2}.
\end{equation*}
It follows from Theorem \ref{tim:2} that $r< 2.24\cdot 10^{17}$. Since $r^{2} -1 = ab$, it follows from Lemma \ref{Lem14} that there are at most $2.43\cdot 10^{20}$ pairs $(a, b)$ with $a<b$. Also, since $d> 4abc> 20b^{2}$ we have $b< 4.49\cdot 10^{34}$. Since the product of the first $25$ primes exceeds $10^{36}$ we conclude that $\omega(b) \leq 24$. Inserting this into the proof in \cite{EFF} we that the number of quintuples is at most
\begin{equation}\label{quince1}
2.24\cdot 10^{17} \cdot 3 \cdot 4 \cdot 2^{26}\leq 1.81\cdot 10^{29}.
\end{equation}

\subsection{Triples of the kind 2(ii)}
We have $r< \left(d/12\right)^{1/3}$ so that, by Theorem \ref{tim:2} we have $r< 2.6\cdot 10^{23}$. Using Lemma \ref{Lem14} and following the proof in \cite{EFF} we find that the number of quintuples is at most
\begin{equation}\label{quince2}
2.0\cdot 10^{27}.
\end{equation}

\subsection{Triples of the kind 2(iii)}
Let $\eta$ be a parameter: we consider the cases $a> \eta$ and $a\leq \eta$ and optimise over $\eta$. In the former, we have $d> 4abc > 4\eta b^{5/2}$ so that $b< (d/(4\eta))^{2/5}:= N_{3a}$. Hence, by Lemma \ref{EFF3.3} and the argument in \cite{EFF}, the number of quintuples is at most
\begin{equation}\label{quince3a}
\frac{N_{3a}}{6} \left( \log N_{3a} + 2)^{3}\right) \cdot 8 \cdot 5 \cdot 4.
\end{equation}

When $a< \eta$, we have $b< (d/(4a))^{2/5}$ so that $r^{2} = ab + 1 < a(d/(4a))^{2/5} +1$. Thus
\begin{equation*}
r< \sqrt{1 + \left( \frac{\eta^{3} d^{2}}{16}\right)^{1/5}}= N_{3b}.
\end{equation*}
We apply Corollary \ref{Core3} with $A = \eta$ and $N = N_{3b}$. Since $b< (d/4)^{2/5} < 2.21\cdot 10^{23}$ we have $\omega(b) \leq 18$. Following the proof in \cite{EFF} we deduce that the number of quintuples is at most
\begin{equation}\label{quince3b}
4 \cdot 2^{18} \cdot 5 \cdot 4 \cdot 4N_{3b}\left( 3\pi^{-2} \log^{2} \eta + 1.3949\log \eta + 0.4107 + 3.253\eta^{-1/3}\right).
\end{equation}
We now try to minimise the maximum of (\ref{quince3a}) and (\ref{quince3b}) by choosing  $\eta$ judiciously. Indeed, at $\eta = 1.29\cdot10^{11}$, the number of quintuples is at most
\begin{equation}\label{quince3}
1.994\cdot 10^{25}.
\end{equation}

\subsection{Triples of the kind 2(iv) and triples of the third kind}
We use the bound $b< 2.66\cdot 10^{25}:= N$ as given in \cite{EFF}. One could improve this by examining Lemma 4.3 in \cite{EFF} combined with our improved Lemma \ref{lem:spec} --- we have not done this. We merely use Lemma \ref{bourbon} to show that the number of such quintuples is at most
\begin{equation}\label{quince4}
4 \left( 6 \pi^{-2} N \log N + 0.787 N - 0.3762 + 8.14 N^{2/3}\right) \leq 3.88\cdot 10^{27}.
\end{equation}

Insofar as triples of the third kind are concerned, we do not attempt to improve on the bound $b< 4.33\cdot 10^{23}:= N$ as obtained in Proposition 4.4 \cite{EFF}. We simply use  Lemma \ref{bourbon} to prove that the number of quintuples containing triples of the third kind is at most
\begin{equation}\label{3rdbound}
4 \left( 6 \pi^{-2} N \log N + 0.787 N - 0.3762 + 8.14 N^{2/3}\right) \leq 5.9\cdot 10^{25}.
\end{equation}
Using (\ref{quince1}), (\ref{quince2}), (\ref{quince3}), (\ref{quince4}) and (\ref{3rdbound}) proves Theorem \ref{Main}.


\section{$D(-1)$-quadruples}\label{sec:quad}
For $a<b<c<d$, a $D(-1)$-quadruple $\{a, b, c, d\}$ is a set with the property that the product of any two of its members is one more than a perfect square. It is conjectured that there are no $D(-1)$-quadruples: in \cite[Thm 1.3]{EFF} it was proved that there are at most $5\cdot 10^{60}$. We use Lemma \ref{Lem15} to offer a small improvement on this. Indeed, we merely insert the new bound of $\sum_{n=1}^{N}d(n^{2} + 1)$ into \cite[pp.\ 11-12]{EFF}. This proves
\begin{thm}
There are at most $3.01\cdot 10^{60}$ $D(-1)$-quadruples.
\end{thm}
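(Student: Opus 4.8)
The plan is to follow the scaffolding already laid down for the Diophantine quintuple count and simply propagate the improved divisor-sum estimate through the argument of \cite{EFF}. The final theorem is a numerical improvement on \cite[Thm.\ 1.3]{EFF}, which bounds the number of $D(-1)$-quadruples by $5\cdot 10^{60}$; the only new ingredient available to us is Lemma \ref{Lem15}, which replaces the estimate for $\sum_{n=2}^{N} d(n^{2}+1)$ from Lemma \ref{Lem10a} by the sharper bound $2N(3\pi^{-2}\log^{2}N + 1.3949\log N + 0.4107 + 3.253 N^{-1/3})$. The leading coefficient drops from $1$ to $6/\pi^{2}\approx 0.6079$, and this is exactly where the gain will come from.

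First I would locate, on pages 11--12 of \cite{EFF}, the point at which the bound on the number of $D(-1)$-quadruples is reduced to counting, over a range $n\le N$ determined by the (already-established) absolute bound on the largest element, the quantity $\sum d(n^{2}+1)$ (this enters because, given two of the entries, the number of ways to complete the quadruple is controlled by the number of divisors of a shifted square). I would carry the established value of $N$ through unchanged, and in the displayed chain of inequalities substitute the right-hand side of Lemma \ref{Lem15} in place of the application of Lemma \ref{Lem10a}. Then I would recompute the resulting numerical product, being careful with the constants $3\pi^{-2}$, $1.3949$, $0.4107$, $3.253$ and the value of $\log N$, and with the combinatorial factors ($2^{\omega}$-type and the small constants) that \cite{EFF} attaches to the sum. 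The arithmetic should yield a bound below $3.01\cdot 10^{60}$, which one then rounds up.

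The main obstacle is not conceptual but bookkeeping: one must make sure that the estimate of Lemma \ref{Lem10a} is the \emph{only} place the $D(-1)$ argument of \cite{EFF} loses a full logarithmic-order constant, so that replacing it really does scale the final bound by roughly $6/\pi^{2}$ (together with the minor effect of the improved lower-order terms), and one must verify that the range $N$ over which the sum is taken is large enough that the $N^{-1/3}$ error term and the additive constants in Lemma \ref{Lem15} are negligible compared to the main term $6\pi^{-2}N\log^{2}N$. A secondary check is that Lemma \ref{Lem15} is stated for $N\ge 2$, which is trivially satisfied here. Assuming those checks go through, the proof is a one-line substitution followed by a numerical evaluation, exactly in the style of the quintuple calculations in \S\ref{sec:calculations}.
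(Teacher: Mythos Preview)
Your proposal is correct and matches the paper's own proof essentially line for line: the paper simply inserts the sharper bound of Lemma~\ref{Lem15} for $\sum d(n^{2}+1)$ into the computation on pp.\ 11--12 of \cite{EFF} and reads off the improved constant. No additional idea is needed beyond the substitution and numerical evaluation you describe.
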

We have not invested any more effort into improving this estimate; we note that in \cite{BBCM} it is announced that the bound can be reduced to $2.5\cdot 10^{60}$.

\section{Conclusion and remarks on further improvements}
We conclude with four suggestions to lower the bounds on $b$ and $d$ in various quadruples. If these bounds can be made sufficiently small, one could enumerate \textit{all possible} quadruples that could be extended to give quintuples. It would then be possible, as in \cite{PlattTrudgianQ1}, to refine this list multiple times, leaving only a `small' number of cases.
\label{sec:conch}
\begin{enumerate}
\item Eliminate more pairs and triples by generating more discards. 
\item Improve on the arguments of Matveev and Aleksentsev for sums of three logarithms. While it may be difficult to improve on the results for $n$ logarithms, it seems reasonable to predict some savings for our specialised application. To this end, see whether one can incorporate ideas from
\cite{Migkit}.
\item Make (\ref{eq:Ing}) and (\ref{pontifex}) explicit.
\item Improve on Lemma \ref{Vine}. Clearly there are no solutions for $b=1$.  It appears that the bound $2^{\omega(b) + 1}$ is obtained if and only if $b \equiv 0 \pmod 8$. Moreover, the number of solutions appears to be equal to $2^{\omega(b) -1}$ whenever $b \equiv 2, 6 \pmod 8$, and equal to $2^{\omega(b)}$ for $b\equiv 1, 3, 4, 5, 7 \pmod 8$. If this were true one should be able to improve on the Lemmas in \S \ref{sec:sums}.
\end{enumerate}

\bibliographystyle{plain}
\bibliography{themastercanada}

\end{document}